\newcommand{\A}{A}
\newcommand{\AGL}{\mathrm{AGL}}
\newcommand{\bbF}{\mathbb{F}}
\newcommand{\Fix}{\mathrm{Fix}}
\newcommand{\fpr}{\mathrm{fpr}}
\newcommand{\GL}{\mathrm{GL}}
\newcommand{\PGaL}{\mathrm{P\Gamma L}}
\newcommand{\PGaU}{\mathrm{P\Gamma U}}
\newcommand{\PGL}{\mathrm{PGL}}
\newcommand{\PSL}{\mathrm{PSL}}
\newcommand{\rmO}{\mathrm{O}}
\newcommand{\SL}{\mathrm{SL}}
\newcommand{\Soc}{\mathrm{Soc}}
\newcommand{\Sp}{\mathrm{Sp}}
\newcommand{\Sym}{\mathrm{Sym}}
\newcommand{\Sy}{S}
\newcommand{\Sh}{\mathrm{Sh}}
\newcommand{\Z}{\mathbf{Z}}
\newtheorem{theorem}{Theorem}[section]
\newtheorem{lemma}[theorem]{Lemma}
\theoremstyle{definition}
\newtheorem{question}[theorem]{Question}
\newtheorem{conjecture}[theorem]{Conjecture}
\newtheorem*{remark}{Remark}
\begin{document}
\title[A complete classification of shuffle groups]{A complete classification of shuffle groups}

\author[Xia]{Binzhou Xia}
\address{School of Mathematics and Statistics\\The University of Melbourne\\Parkville, VIC 3010\\Australia}
\email{binzhoux@unimelb.edu.au}

\author[Zhang]{Junyang Zhang}
\address{School of Mathematical Sciences\\Chongqing Normal University\\Chongqing, 401331\\P.R.~China}
\email{jyzhang@cqnu.edu.cn}

\author[Zhang]{Zhishuo Zhang}
\thanks{Corresponding author: Zhishuo Zhang}
\address{School of Mathematics and Statistics\\The University of Melbourne\\Parkville, VIC 3010\\Australia}
\email{zhishuoz@student.unimelb.edu.au}

\author[Zhu]{Wenying Zhu}
\address{School of Mathematical Sciences, and Hebei Center for Applied Mathematics\\Hebei Normal University\\
Shijiazhuang, 050024\\P.R.~China}
\email{zfwenying@mail.bnu.edu.cn}




\begin{abstract}
For positive integers $k$ and $n$, the shuffle group $G_{k,kn}$ is generated by the $k!$ permutations of a deck of $kn$ cards performed by cutting the deck into $k$ piles with $n$ cards in each pile, and then perfectly interleaving these cards following a certain permutation of the $k$ piles. For $k=2$, the shuffle group $G_{2,2n}$ was determined by Diaconis, Graham and Kantor in 1983. The Shuffle Group Conjecture states that, for general $k$, the shuffle group $G_{k,kn}$ contains $\mathrm{A}_{kn}$ whenever $k\notin\{2,4\}$ and $n$ is not a power of $k$. In particular, the conjecture in the case $k=3$ was posed by Medvedoff and Morrison in 1987. The only values of $k$ for which the Shuffle Group Conjecture has been confirmed so far are powers of $2$, due to recent work of Amarra, Morgan and Praeger based on Classification of Finite Simple Groups. In this paper, we confirm the Shuffle Group Conjecture for all cases using results on $2$-transitive groups and elements of large fixed point ratio in primitive groups.

\textit{Key words:} perfect shuffle, shuffle group, $2$-transitive permutation group, fixed point ratio.

\textit{MSC2020:} 20B35
\end{abstract}
\maketitle



\section{Introduction}\label{sec1}

For a deck of $2n$ cards, the usual way to perfectly shuffle the deck is to first cut the deck in half (see Figure~1) and then perfectly interleave the two halves.
There are two kinds of such shuffles according to whether the original top card remains on top or not (see Figures~2 and~3).
Note that these two shuffles are permutations of the $2n$ cards. To exactly know what permutations of the cards can be achieved by performing a sequence of these two shuffles, one needs to determine the permutation group generated by these two shuffles.
In 1983, Diaconis, Graham and Kantor~\cite{DGK1983} completely determined this group for all $n$ (see Theorem~\ref{thm:2k}).
Moreover, at the end of~\cite{DGK1983}, they suggested a more general problem: For an integer $k\geq2$, if a deck of $kn$ cards are divided into $k$ piles with $n$ cards in each pile, then there are $k!$ possible orders of picking up the piles to perfectly interleave. Therefore, there are $k!$ such ways to perfectly shuffle the $kn$ cards, and one may consider the group generated by these $k!$ permutations.

\vspace{15pt}
{\small
\begin{center}
\begin{tikzpicture}
\draw  (-1,-3.4) node {\textsc{Figure} 1:~Cut the deck};
\draw  (5,-3.4) node {\textsc{Figure} 2:~Out-shuffle};
\draw  (9,-3.4) node {\textsc{Figure} 3:~In-shuffle};
\tikzstyle{every node}=[draw,circle,fill=white,minimum size=0pt,inner sep=0pt]
\draw (-2.8,0) node (0) [label=left:$0$] {}
      (-2.8,-0.6) node (1) [label=left:$1$] {}
      (-2.65,-1.2)  node[fill=black,minimum size=2pt,inner sep=0pt] {}
      (-2.65,-1.5)  node[fill=black,minimum size=2pt,inner sep=0pt] {}
      (-2.65,-1.8)  node[fill=black,minimum size=2pt,inner sep=0pt] {}
      (-2.8,-2.4)  node (2n-1) [label=left:$2n-1$] {};
\draw (0,0) node (3) [label=left:$0$] {}
      (0,-0.6) node (4) [label=left:$1$] {}
      (0.25,-1.2)  node[fill=black,minimum size=2pt,inner sep=0pt] {}
      (0.25,-1.5)  node[fill=black,minimum size=2pt,inner sep=0pt] {}
      (0.25,-1.8)  node[fill=black,minimum size=2pt,inner sep=0pt] {}
      (0,-2.4)  node (5) [label=left:$n-1$] {};
\draw (1.25,0) node (0.22) [label=right:$n$] {}
      (1.25,-0.6) node (1.22) [label=right:$n+1$] {}
      (1,-1.2)  node[fill=black,minimum size=2pt,inner sep=0pt] {}
      (1,-1.5)  node[fill=black,minimum size=2pt,inner sep=0pt] {}
      (1,-1.8)  node[fill=black,minimum size=2pt,inner sep=0pt] {}
      (1.25,-2.4)  node (2n-1.2) [label=right:$2n-1$] {};
\draw (4.6,0) node (0.31) [label=left:$0$] {}
      (5.4,-0.3) node (0.32) [label=right:$n$] {}
      (4.6,-0.6) node (1.31) [label=left:$1$] {}
      (5.4,-0.9) node (1.32) [label=right:$n+1$] {}
      (5,-1.3)  node[fill=black,minimum size=2pt,inner sep=0pt] {}
      (5,-1.6)  node[fill=black,minimum size=2pt,inner sep=0pt] {}
      (5,-1.9)  node[fill=black,minimum size=2pt,inner sep=0pt] {}
      (4.6,-2.3)  node (2.31) [label=left:$n-1$] {}
      (5.4,-2.6)  node (2.32) [label=right:$2n-1$] {};
\draw (8.6,0) node (15) [label=left:$n$] {}
      (9.4,-0.3) node (16) [label=right:$0$] {}
      (8.6,-0.6) node (17) [label=left:$n+1$] {}
      (9.4,-0.9) node (18) [label=right:$1$] {}
      (9,-1.3)  node[fill=black,minimum size=2pt,inner sep=0pt] {}
      (9,-1.6)  node[fill=black,minimum size=2pt,inner sep=0pt] {}
      (9,-1.9)  node[fill=black,minimum size=2pt,inner sep=0pt] {}
      (8.6,-2.3)  node (19) [label=left:$2n-1$] {}
      (9.4,-2.6)  node (20) [label=right:$n-1$] {};
\draw (-1.7,-1.2)  node (8) [fill=black,minimum size=0pt,inner sep=0pt]{}
      (-1.1,-1.2)  node (9)[fill=black,minimum size=0pt,inner sep=0pt]{};
\draw [fill=black,very thick,->](8) -- (9);
\draw [thick,-] (-2.8,0) -- (-2.3,0)
                (-2.8,-0.6) -- (-2.3,-0.6)
                (-2.8,-2.4) -- (-2.3,-2.4)
                (0,0)     --  (0.5,0)
                (0,-0.6)  --  (0.5,-0.6)
                (0,-2.4)  --  (0.5,-2.4)
                (0.75,0)  --  (1.25,0)
                (0.75,-0.6)  --  (1.25,-0.6)
                (0.75,-2.4)  --  (1.25,-2.4)
                (4.6,0)  --  (5.1,0)
                (4.9,-0.3)  --  (5.4,-0.3)
                (4.6,-0.6)  --  (5.1,-0.6)
                (4.9,-0.9)  --  (5.4,-0.9)
                (4.6,-2.3)  --  (5.1,-2.3)
                (4.9,-2.6)  --  (5.4,-2.6)
                (8.6,0)  --  (9.1,0)
                (8.9,-0.3)  --  (9.4,-0.3)
                (8.6,-0.6)  --  (9.1,-0.6)
                (8.9,-0.9)  --  (9.4,-0.9)
                (8.6,-2.3)  --  (9.1,-2.3)
                (8.9,-2.6)  --  (9.4,-2.6);
\end{tikzpicture}
\end{center}
}
\vspace{12pt}

Throughout this paper, for a positive integer $m$, we set
\[
[m]=\{0,1,\ldots,m-1\}.
\]
For a deck of $kn$ cards, the card in position $i+jn$, where $i\in[n]$ and $j\in[k]$, refers to the $(i+jn)$-th card from top to bottom with the top one being the $0$-th card.
We may also think of them in $k$ piles such that the $j$-th pile consists of the cards in the positions $jn,1+jn,\dots,n-1+jn$, where $j\in\{0,1,\dots,k-1\}$.
The \emph{standard shuffle} of the $kn$ cards, denoted by $\sigma$, is performed by picking up the top card from each of the piles $0,\dots,k-1$ in order and repeating until all cards have been picked up, that is, $\sigma$ is the permutation of $[kn]$ defined by
\[
(i+jn)^{\sigma}=ik+j\ \text{ for all }\ i\in[n]\,\text{ and }\,j\in[k].
\]
Let $\tau\in\Sym([k])$ be a permutation of the $k$ plies. Then $\tau$ induces a permutation $\rho_\tau$ of the $kn$ cards by keeping the order of the cards within each plie, so that
\begin{equation}\label{Eqn:rho}
(i+jn)^{\rho_\tau}=i+j^\tau n\ \text{ for all }\ i\in[n]\,\text{ and }\,j\in[k].
\end{equation}
The card shuffle $\rho_\tau\sigma$ is to first perform $\rho_\tau$ and then $\sigma$, that is,
\[
(i+jn)^{\rho_\tau\sigma}=(i+j^\tau n)^{\sigma}=ik+j^\tau\ \text{ for all }\ i\in[n]\,\text{ and }\,j\in[k].
\]
The subgroup of $\Sym([kn])$ generated by $\rho_\tau\sigma$ for all $\tau\in\Sym([k])$ is called the \emph{shuffle group} on $kn$ cards and denoted by $G_{k,kn}$, that is,
\[
G_{k,kn}=\langle\rho_\tau\sigma\mid\tau\in\Sym([k])\rangle=\langle\sigma,\rho_\tau\mid\tau\in\Sym([k])\rangle.
\]
In this terminology, what is suggested at the end of~\cite{DGK1983} is to determine the shuffle group $G_{k,kn}$.

For a positive integer $m$, let $C_m$, $A_m$ and $S_m$ be the cyclic group of order $m$, alternating group on $m$ points and symmetric group on $m$ points, respectively.
In 1983, Diaconis, Graham and Kantor~\cite{DGK1983} completely determined $G_{2,2n}$ as follows.

\begin{theorem}\label{thm:2k}
\emph{(Diaconis-Graham-Kantor)} For $G=G_{2,2n}$, the following hold:
  \begin{enumerate}[\rm(a)]
    \item If $n\equiv 0\pmod{4}$, $n>12$ and $n$ is not a power of $2$, then $G=C^{n-1}_2\rtimes A_n$.
    \item If $n\equiv 1\pmod{4}$, then $G=C^n_2\rtimes A_n$.
    \item If $n\equiv 2\pmod{4}$ and $n>6$, then $G$ is the imprimitive wreath product $C_2\wr S_n$.
    \item If $n\equiv 3\pmod{4}$, then $G=C^{n-1}_2\rtimes S_n$ is the Weyl group of the root system $D_n$.
    \item If $n=2^m$ for some positive integer $m$, then $G$ is the primitive wreath product $C_2\wr C_{m+1}$.
    \item If $n=6$, then $G=C_2^6\rtimes \PGL(2,5)$.
    \item If $n=12$, then $G=C_2^{11}\rtimes M_{12}$, where $M_{12}$ is the Mathieu group on $12$ points.
  \end{enumerate}
\end{theorem}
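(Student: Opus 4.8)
The plan is to realise $G=G_{2,2n}$ inside an imprimitive wreath product, study the induced action on a natural block system, and then read off the structure from the classification of finite $2$-transitive groups together with an $\mathbb{F}_2$-module computation. Write $\rho=\rho_{(0\,1)}$, so that $G=\langle\sigma,\rho\rangle$. A direct check using $(i+jn)^\sigma=2i+j$ and $(i+jn)^\rho=i+(1-j)n$ shows that $\sigma$ fixes $0$ and $2n-1$ and sends every other position $p$ to $2p\pmod{2n-1}$, while $\rho\sigma$ acts as $p\mapsto 2p+1\pmod{2n+1}$; hence $|\sigma|=\mathrm{ord}_{2n-1}(2)$ and $|\rho\sigma|=\mathrm{ord}_{2n+1}(2)$. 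The first key point is that both $\sigma$ and $\rho\sigma$ commute with the antipodal involution $\alpha\colon p\mapsto 2n-1-p$ — equivalently, after the shifts $p\mapsto p$ and $p\mapsto p+1$ they become multiplication by $2$ on $\mathbb{Z}/(2n-1)$ and on $\mathbb{Z}/(2n+1)$, each of which clearly commutes with $x\mapsto -x$. Hence $G$ centralises $\alpha$, and in particular preserves the partition of $[2n]$ into the $n$ antipodal pairs $B_p=\{p,\,2n-1-p\}$, $p\in[n]$. Thus $G\leq C_2\wr S_n$, with kernel $K=G\cap C_2^{\,n}$ acting within the blocks and quotient $\bar G\leq S_n$ acting on the $n$ blocks.

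Next I would determine $\bar G$. The power-of-$2$ case is treated separately: when $n=2^m$ the shuffle $\sigma$ is the cyclic rotation of the $(m+1)$-bit binary label of a position, from which $G=C_2\wr C_{m+1}$ in product action follows by a short direct argument, giving~(e). Assume henceforth $n$ is not a power of $2$. One shows $\bar G$ is transitive, then primitive, then $2$-transitive, using that $\rho\sigma$ acts on the blocks as multiplication by $2$ on the $\pm$-pairs of $\mathbb{Z}/(2n+1)$, hence as a product of equal-length cycles, and that $\sigma$ acts on the blocks fixing $B_0$ (and at most one further block) while moving the rest in long cycles, so that a block-stabiliser in $\bar G$ is transitive on the remaining blocks. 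Granted $2$-transitivity, the classification of finite $2$-transitive groups confines $\bar G$ to a short list; these candidates are eliminated using the cycle structures of $\sigma$ and $\rho\sigma$ on the blocks, together with the observation that $\sigma$ induces on the blocks an element of small fixed point ratio in a primitive group, which forces $\bar G\geq A_n$ in all but finitely many degrees. A parity computation then decides $A_n$ versus $S_n$ according to $n\bmod 4$, while at $n=6$ one finds $\bar G=\PGL(2,5)$ in its $2$-transitive action on $6$ points and at $n=12$ one finds $\bar G=M_{12}$, recovering~(f) and~(g).

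The third step identifies $K$ and the extension. I would compute explicit short words in $\sigma$ and $\rho$ that land in the base group $C_2^{\,n}$ — commutators such as $[\sigma,\rho]$ and their $\bar G$-conjugates — and determine the $\mathbb{F}_2[\bar G]$-submodule of $C_2^{\,n}$ they generate. Since $\bar G$ contains $A_n$, this submodule is either the whole of $C_2^{\,n}$ or the $(n-1)$-dimensional augmentation submodule $\{v:\sum_i v_i=0\}$, and the dichotomy comes down to whether the all-ones vector (the antipodal map $p\mapsto 2n-1-p$) itself lies in $G$, which in turn depends on $n\bmod 4$. Combining the value of $K$ with that of $\bar G$, and verifying splitness by exhibiting a complement among the coordinate permutations, yields~(a)--(d); in particular $n\equiv 3\pmod 4$ gives $K=C_2^{\,n-1}$ with split complement $S_n$, i.e.\ the Weyl group of $D_n$. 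Checking directly that the small values $n\in\{4,8,12\}$ fall outside the generic pattern explains the hypotheses ``$n>12$, $n$ not a power of $2$'' in~(a) and ``$n>6$'' in~(c).

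The main obstacle is the elimination step in the second part. The order of $\sigma$ on the blocks is the multiplicative order of $2$ modulo $2n-1$, which has no closed form, so every argument ruling out a spurious $2$-transitive candidate for $\bar G$ must be uniform over all possible values of this order; this is precisely where the classification of $2$-transitive groups and sharp bounds on fixed point ratios in primitive groups do the real work, with only the finitely many genuinely small cases ($n=6,12$, and the powers of $2$) handled by hand. The accompanying module-and-extension bookkeeping in the third part is delicate but elementary, amounting to linear algebra over $\mathbb{F}_2$ together with a sign count.
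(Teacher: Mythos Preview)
The paper does not prove Theorem~\ref{thm:2k}; it is stated as a result of Diaconis, Graham and Kantor~\cite{DGK1983} and quoted purely as background. There is therefore no ``paper's own proof'' to compare against --- the present paper's contribution begins at $k\geq3$.

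As for your sketch on its own merits: the opening move is exactly right --- both generators centralise the antipodal involution $\alpha\colon p\mapsto 2n-1-p$, so $G\leq C_2\wr S_n$ with respect to the antipodal block system, and the problem factors into determining the quotient $\bar G\leq S_n$ and the base-group intersection $K\leq C_2^{\,n}$. The power-of-$2$ case via binary digits is also standard. The real gap is the middle step: you assert that $\bar G$ is $2$-transitive because ``a block-stabiliser in $\bar G$ is transitive on the remaining blocks,'' but you have not indicated how to prove this. Knowing only that $\bar\sigma$ fixes $B_0$ and moves the other blocks in cycles whose lengths are the multiplicative orders of $2$ modulo various divisors of $2n-1$ is far from enough to force transitivity of $(\bar G)_{B_0}$ on the remaining $n-1$ blocks; those cycle lengths are erratic, and $\bar{\rho\sigma}$ contributes nothing to a point-stabiliser. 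In the original argument this step is genuinely the crux and requires substantially more than what you have written. Likewise, invoking ``sharp bounds on fixed point ratios'' to pin down $\bar G$ once $2$-transitive is plausible in spirit, but note that $\bar\sigma$ has one or two fixed blocks out of $n$, so its fixed point ratio is \emph{small}, not large; small fixed point ratio does not by itself force $\bar G\geq A_n$, and you would still need to exclude the remaining almost simple $2$-transitive candidates case by case.
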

%

In 1987, Medvedoff and Morrison~\cite{MM1987} initiated a systematic study of the shuffle group $G_{k,kn}$ for general $k$. They showed (see~\cite[Theorem~2]{MM1987}) that, if $n$ is a power of $k$, then $G_{k,kn}$ is the primitive wreath product of $\Sy_k$ by the cyclic group of order $\log_k(kn)$, that is,
\begin{equation}\label{eq:wr}
      G_{k,k^m}=S_k\wr C_m.
\end{equation}
 Moreover, based on computation results, Medvedoff and Morrison conjectured in~\cite{MM1987} that $G_{3,3n}$ contains $\A_{3n}$ if $n$ is not a power of $3$, which is essentially a conjectural classification of $G_{3,3n}$ (see the Remark after Conjecture~\ref{conk}). Similarly, they made a conjecture for $k=4$ in the same paper, which states that $G_{4,4n}$ contains $\A_{4n}$ if $n$ is not a power of $2$, and $G_{4,4n}$ is the full affine group of degree $4n$ if $n$ is an odd power of $2$.
The latter part of this conjecture, that is,
\begin{equation}\label{eq:aff}
      G_{4,2^{2\ell+1}}=\AGL(2\ell+1,2),
\end{equation} was confirmed in 2005 by Cohen, Harmse, Morrison and Wright~\cite[Theorem~2.6]{CHMW2005}. This leads them to the following conjecture.

\begin{conjecture}\label{conk}
(Shuffle Group Conjecture) For $k\geq3$, if $n$ is not a power of $k$ and $(k,n)\neq(4,2^f)$ for any positive integer $f$, then $G_{k,kn}$ contains $\A_{kn}$.
\end{conjecture}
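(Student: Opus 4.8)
The plan is to prove that, under the stated hypotheses, $G:=G_{k,kn}$ is primitive of degree $N:=kn$, and then to identify $G$ using classifications of primitive groups containing an element of large fixed point ratio together with the classification of finite $2$-transitive groups. Three features of $G$ will be used throughout, all immediate from the definitions. The permutations $\rho_\tau$, $\tau\in\Sym([k])$, generate $R\cong S_k$, which permutes the $k$ piles faithfully and preserves the partition of $[kn]$ into them; hence $G$ contains the involution $t:=\rho_{(0\,1)}$, a product of $n$ disjoint transpositions with $\fpr(t)=(k-2)/k$, and, for $k=3$, the fixed-point-free element $\rho_{(0\,1\,2)}$, a product of $n$ disjoint $3$-cycles; conjugating $R$ by $\sigma$ yields a second copy $\sigma^{-1}R\sigma\cong S_k$, transverse to $R$. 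Moreover $\sigma$ fixes positions $0$ and $kn-1$ and, on the remaining $kn-2$ positions identified with $\mathbb{Z}_{kn-1}\setminus\{0\}$, acts as multiplication by $k$; so the nontrivial cycle lengths of $\sigma$ are exactly the integers $\mathrm{ord}_d(k)$ for divisors $d>1$ of $kn-1$, the longest being $e:=\mathrm{ord}_{kn-1}(k)$. We may assume $n\ge2$, since $n=1=k^0$.

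\emph{Step 1 (primitivity).} The first task is to show that $G$ is primitive whenever $n$ is not a power of $k$. Suppose instead that $G$ preserves a partition $\mathcal B$ into blocks of size $d$ with $1<d<kn$. Since $\mathcal B$ is $R$-invariant and $R$ induces $\Sym([k])$ on the piles, an analysis of the $\Sym([k])$-invariant partitions of $[k]\times[n]$ (with $\Sym([k])$ acting on the first coordinate) restricts $\mathcal B$ to a short list of shapes relative to the pile partition --- each block contained in a pile, or $\mathcal B$ built from a partition of $[n]$ together with a block system of $\Sym([k])$ on $[k]$, or a few hybrids. Imposing in addition $\sigma$-invariance and exploiting the description of $\sigma$ as multiplication by $k$ modulo $kn-1$, one then forces $d$, and hence $n$, to be a power of $k$ --- a contradiction. (When $n$ is a power of $k$ the same analysis instead confirms that $G$ is the primitive wreath product of~\eqref{eq:wr}, a case excluded from the conjecture.)

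\emph{Steps 2--3 (affine type, then identifying $G$).} With $G$ primitive, I would first rule out affine type: if $G\le\mathrm{A\Gamma L}(d,p)$ with $p^d=kn$, then every prime divisor of $k$ is $p$, so $k=p^e$ and $n=p^{d-e}$, and since $R\cong S_{p^e}$ has trivial intersection with the translation subgroup once $p^e\ge5$, $R$ embeds in $\mathrm{\Gamma L}(d,p)$; the known minimal faithful representations of $S_{p^e}$ together with an order comparison force $p^e\le4$, whence $k=3$ makes $n$ a power of $3$ and $k=4$ makes $n$ a power of $2$ --- the case $(4,2^f)$ of~\eqref{eq:aff} --- both excluded. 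For the non-affine primitive possibilities I would apply the classification of primitive groups of degree $N$ possessing a nonidentity element $g$ with $\fpr(g)\ge 1/3$ (and, for $k\ge4$, the sharper list for $\fpr(g)\ge1/2$), taken with $g=t$: either $G\ge A_N$, as desired, or $(G,[kn])$ is, up to the natural equivalences, $A_m$ or $S_m$ on $\ell$-subsets or uniform partitions of an $m$-set, or $\Soc(G)$ is classical in a subspace action, or $G$ has product action, or $(G,[kn])$ lies in an explicit finite list. For the finite list, and for those instances of the other cases that are $2$-transitive (the subspace actions on $1$-spaces, the Mathieu groups, and the like), I would invoke the classification of $2$-transitive groups, after which the general upper bounds on $\fpr$ for almost simple primitive groups eliminate everything of bounded degree except $A_m$ or $S_m$ in the natural action on $m=kn$ points. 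The remaining subset, partition, subspace and product actions of unbounded degree are removed by the combinatorial invariants recorded above: in the $\ell$-subset (resp.\ partition) action every cycle length is a least common multiple of at most $\ell$ (resp.\ few) cycle lengths of an element of $S_m$, no element realises $\fpr=(k-2)/k$ as an involution nor is a product of $n$ disjoint fixed-point-free $3$-cycles unless $m$ is very small, and $k!\mid|G|$ gives $k\le m$; the classical subspace and product actions succumb similarly, with the pile block system forcing $k$ to be a small prime power and the cycle of length $e$ in $\sigma$ then exceeding the element-order bounds in the candidate group. The finitely many genuinely small pairs $(k,n)$ are settled by direct computation, the only non-alternating shuffle groups among them belonging to the excluded families $n=k^m$ and $(4,2^f)$. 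Hence $G=G_{k,kn}\supseteq A_{kn}$ in every case.

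I expect Step 1 to be the principal obstacle: it must reconcile the arithmetic of $x\mapsto kx\bmod(kn-1)$ with the $\Sym([k])$-symmetry of the piles, and it is exactly where the exceptional families are singled out (indeed, this is the mechanism that distinguishes $k\ge3$, where $G$ is always primitive, from $k=2$ in Theorem~\ref{thm:2k}). A secondary difficulty is the clean elimination of the subset and partition actions of $A_m$ and $S_m$, for which order and element-order bounds alone do not suffice and one must exploit the precise cycle structures of $\sigma$ and of $\rho_{(0\,1\,2)}$.
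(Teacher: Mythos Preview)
Your overall architecture --- establish a structural property of $G_{k,kn}$ and then invoke fixed-point-ratio classifications --- matches the paper's, but the structural property you aim for (primitivity) is strictly weaker than the one the paper actually proves ($2$-transitivity), and this difference matters in both directions.

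The paper's route is Theorems~\ref{thm2tran} and~\ref{thmreduct}. First comes an explicit, purely combinatorial proof that $G_{k,kn}$ is $2$-transitive whenever $k\ge3$ and $n$ is not a power of $k$, carried out by showing that the subgroup $H=\langle\sigma,\rho_\tau:\tau\in\Sym([k-1])\rangle$ (which fixes $kn-1$) acts transitively on $[kn-1]$. This is done by writing each point in a mixed base-$k$/mod-$t$ form and tracking it under carefully chosen words in $\sigma^{\pm1}$ and the $\rho_\tau$, driving down the number $T(x)$ of base-$k$ digits equal to $k-1$. Only after $2$-transitivity is in hand does the paper apply the fixed-point-ratio machinery: because $2$-transitive groups are affine or almost simple, the O'Nan--Scott layer collapses entirely and one can go straight to Guralnick--Magaard for $k\ge4$ and to the short list of almost simple $2$-transitive groups (Cameron's Table~7.4) for $k=3$.

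Your Step~1 is therefore the genuine gap. You acknowledge it as the principal obstacle and give only a heuristic: analyse $R$-invariant partitions of $[k]\times[n]$ and then impose $\sigma$-invariance. But the $\sigma$-invariance condition is exactly the delicate arithmetic of $x\mapsto kx\bmod(kn-1)$, and nothing in your outline explains why the ``short list of shapes'' survives this step without an argument of depth comparable to the paper's Section~\ref{sec4}. (Had primitivity been routinely obtainable by a block-system analysis, Amarra--Morgan--Praeger would presumably not have restricted themselves to the three special cases (i)--(iii).) Even granting primitivity, your Steps~2--3 must then eliminate the non-$2$-transitive primitive families --- $A_m$ and $S_m$ on $\ell$-subsets and on uniform partitions, product actions, and for $k=3$ the full Liebeck--Saxl list at $\fpr=1/3$ --- and your sketches for these (``cycle lengths are LCMs of at most $\ell$ lengths'', ``element-order bounds'') are not worked out and not obviously sufficient. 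The paper sidesteps all of this: $2$-transitivity rules those families out before any fixed-point-ratio work begins. In short, the paper trades your unresolved block-system analysis for an explicit orbit computation that simultaneously delivers the stronger hypothesis and the shorter classification list.
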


\begin{remark}
From the definitions of $\sigma$ and $\rho_\tau$, it is not hard to see that $G_{k,kn}$ is a subgroup of $\A_{kn}$ if and only if either $n\equiv2\pmod4$ and $k\equiv0$ or $1\pmod4$, or $n\equiv0\pmod4$ (see, for example, ~\cite[Theorem~1]{MM1987}). This implies that $G_{k,kn}$ is precisely determined if we know that $G_{k,kn}$ contains $\A_{kn}$. Therefore, the above Shuffle Group Conjecture is in fact a conjectural classification of the shuffle groups $G_{k,kn}$ for all $k\geq3$ and $n\geq1$.
\end{remark}

Among other results, Amarra, Morgan and Praeger~\cite{AMP2021} recently confirmed Conjecture~\ref{conk} for the following three cases:
\begin{enumerate}[\rm(i)]
\item $k>n$;
\item $k$ and $n$ are powers of the same integer $\ell\geq2$;
\item $k$ is a power of $2$, and $n$ is not a power of $2$.\label{state}
\end{enumerate}
Note that~(ii) and~(iii) together imply the validity of Conjecture~\ref{conk} whenever $k$ is a power of $2$.
We also remark that the Classification of Finite Simple Groups (CFSG) comes into play in the study of shuffle groups in~\cite{AMP2021}. In fact, CFSG was already applied in an unpublished result of William Kantor (see~\cite{K} and~\cite[Page~13]{MM1987}) to prove that $G_{k,kn}\geq A_{kn}$ if $k\geq 4$ and $k$ does not divide $n$.

In this paper, we prove Conjecture~\ref{conk} for all $k$ and $n$ (see Theorems~\ref{thmreduct} and~\ref{thm2tran}). Our approach is to reduce the proof of the conjecture to that of the $2$-transitivity of $G_{k,kn}$ by considering the fixed point ratio of certain element therein. This approach makes use of some deep classification results (depending on CFSG) in group theory, for example, the classification of $2$-transitive groups and primitive groups with elements of large fixed point ratio~\cite{Burness2018,GM1998,LS1991}. Our reduction theorem is as follows.

\begin{theorem}\label{thmreduct}
      If $G_{k,kn}$ is $2$-transitive with $k\geq3$, then either $k=4$ and $n$ is an odd power of $2$, or $G_{k,kn}$ contains $\A_{kn}$.
\end{theorem}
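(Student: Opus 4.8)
The plan is to assume that $G:=G_{k,kn}$ is $2$-transitive with $k\ge3$ and that $\A_{kn}\not\le G$, and then to force $k=4$ with $n$ an odd power of $2$. Everything revolves around one distinguished element. Choose a transposition $\tau\in\Sym([k])$; since $G=\langle\sigma,\rho_\mu\mid\mu\in\Sym([k])\rangle$ we have $\rho_\tau\in G$, and by~\eqref{Eqn:rho} the permutation $\rho_\tau$ fixes every card lying in one of the $k-2$ piles untouched by $\tau$ while interchanging the $n$ pairs of corresponding cards in the two remaining piles. Hence $\rho_\tau$ is an involution of cycle type $1^{(k-2)n}2^{n}$, so that
\[
\fpr(\rho_\tau)=\frac{(k-2)n}{kn}=1-\frac2k .
\]
By the CFSG-based classification of $2$-transitive groups, $G$ is either of affine type, with $\Soc(G)\cong C_p^{\,d}$ and $kn=p^{d}$, or almost simple; and since $\A_{kn}\not\le G$, in the almost simple case the socle lies in the standard list (the groups $\PSL_d(q)$ on the points or hyperplanes of a projective space, the unitary groups $\mathrm{PSU}_3(q)$, the Suzuki and Ree groups, the two families of symplectic actions of $\Sp_{2d}(2)$, finitely many actions of sporadic type, and a handful of small actions with alternating socle of smaller degree).

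The affine case is settled by a short arithmetic argument. Inside $\AGL(d,p)$ the fixed-point set of the involution $\rho_\tau$ is a coset of $\ker(\rho_\tau-1)$, hence of size $p^{e}$ for some $e$; matching this with the cycle type above gives $(k-2)n=p^{e}$ while $kn=p^{d}$, so $p^{\,d-e}=k/(k-2)=1+2/(k-2)$. Thus $k-2\mid 2$, i.e.\ $k\in\{3,4\}$. If $k=3$ then $p^{\,d-e}=3$, so $p=3$ and $n=3^{d-1}$ is a power of $3$; by~\eqref{eq:wr}, $G=S_3\wr C_d$ in product action, which is $2$-transitive only when $d=1$, forcing $n=1$ and $G=S_3\supseteq\A_3$, contrary to $\A_{kn}\not\le G$. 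If $k=4$ then $p^{\,d-e}=2$, so $p=2$ and $n=2^{d-2}$; if $d$ were even then $n$ would be a power of $4$ and $G=S_4\wr C_{d/2}$ by~\eqref{eq:wr}, again $2$-transitive only for $d=2$, giving $n=1$ and $\A_4\le G=S_4$; hence $d$ is odd and $n=2^{d-2}$ is an odd power of $2$, which is precisely the exceptional conclusion.

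It remains to show that the almost simple case cannot occur, and for this I would exploit $\fpr(\rho_\tau)=1-2/k$ together with the quantitative fixed-point-ratio results of~\cite{Burness2018,GM1998,LS1991}. For $k\ge5$ we have $\fpr(\rho_\tau)\ge 3/5$, while every $2$-transitive almost simple group of degree $N$ with socle $\ne\A_N$ has largest non-trivial fixed point ratio at most $4/7$ (attained by a transvection in the $\Sp_{2d}(2)$-action on the minus-type quadratic forms, degree $2^{d-1}(2^d-1)$, where the ratio equals $2^{d-1}/(2^d-1)$; all other families obey the bound $1/2$); since $3/5>4/7$ this rules out $k\ge5$. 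For $k=4$ we have $\fpr(\rho_\tau)=1/2$, and among almost simple $2$-transitive groups with socle $\ne\A_N$ this value is attained only in the $\Sp_{2d}(2)$-actions; one eliminates these by inspecting the fixed-point counts of the involution classes of $\Sp_{2d}(2)$ on the plus- and minus-type forms and, in any residual case, by applying Jordan's theorem to $\sigma$ — which on $\{1,\dots,kn-2\}$ acts as multiplication by $k$ modulo $kn-1$ and for these small degrees contains a cycle of prime length $p\le kn-3$, so that the primitive group $G_{4,4n}$ must contain $\A_{4n}$. For $k=3$ only $\fpr(\rho_\tau)=1/3$ is available, which is too weak to conclude at one stroke; here I would run through the list of $2$-transitive almost simple groups of degree $N$ with $3\mid N$ and verify, using the explicit structure of involutions in each family, that none admits an involution fixing exactly $N/3$ points with cycle type $1^{N/3}2^{N/3}$. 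This eliminates the infinite families and leaves a finite set of small degrees where such an involution does occur (for instance $M_{12}$ of degree $12$ and $M_{24}$ of degree $24$); each of those is then dispatched by the same device — the corresponding $\sigma$ contains a cycle of prime length $p$ with $3\le p\le kn-3$, so Jordan's theorem applied to the primitive group $G_{3,3n}$ yields $\A_{3n}\le G_{3,3n}$. In every branch one contradicts $\A_{kn}\not\le G$, so $G$ is of affine type and the theorem follows.

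The step I expect to be the main obstacle is the $k=3$ portion of the almost simple analysis. Because the fixed point ratio is only $1/3$, a single inequality no longer suffices, and one must carry out a family-by-family comparison of involution cycle types against the divisibility constraint $3\mid kn$, together with an irreducible but finite collection of small degrees that have to be knocked out individually — the cleanest tool for the latter being the prime-cycle form of Jordan's theorem applied to the canonical shuffle $\sigma$. To a lesser extent the extremal value $\fpr(\rho_\tau)=1/2$ at $k=4$ also requires care, since there the precise fixed-point counts for the involution classes of $\Sp_{2d}(2)$ on the quadratic forms of each type must be computed.
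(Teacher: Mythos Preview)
Your affine analysis matches the paper's Lemma~\ref{lemaffine} essentially verbatim, and your use of~\eqref{eq:wr} to kill the $2$-transitive wreath products is the same.

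For $k\ge4$ the paper proceeds more cleanly than you do. The content of your ``$4/7$ bound'' is exactly~\cite[Theorem~1]{GM1998}: for a primitive almost simple group with $\Soc(G)\ne\A_N$, any non-identity element with $\fpr\ge 1/2$ has $\fpr=\tfrac12+\tfrac{1}{2(2^r\pm1)}$ for some $r\ge3$. All such values are \emph{strictly} greater than $1/2$ and at most $4/7$. So at $k=4$ the value $\fpr(\rho_\tau)=1/2$ is already impossible and there is nothing to inspect in $\Sp_{2d}(2)$; the paper dispatches $k\ge4$ in three lines. Your proposed extra work there (involution classes of $\Sp_{2d}(2)$, then Jordan) is unnecessary.

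The genuine gap is at $k=3$. Your Jordan's-theorem device does not work as stated: the classical criterion requires an element that \emph{is} a $p$-cycle (a permutation whose support is a single cycle of prime length $p\le N-3$), not merely an element one of whose cycles has prime length. For $kn=12$ the shuffle $\sigma$ (multiplication by $3$ modulo $11$ on $\{1,\dots,10\}$) has cycle type $1^2\,5^2$, and for $kn=24$ it has type $1^2\,11^2$; in neither case does any power of $\sigma$ yield a single prime cycle, so you cannot knock out $M_{12}$ or $M_{24}$ this way. More broadly, your ``family-by-family comparison of involution cycle types'' for $\fpr=1/3$ is just a plan, not an argument. The paper handles $k=3$ by first computing $G_{3,3n}$ in \textsc{Magma} for all $n\le92$ (which absorbs all sporadic $2$-transitive degrees), and then for $n>92$ splits the Lie-type socle by its field size $q$: $q>4$ via Liebeck--Saxl~\cite{LS1991}, $q=4$ via Guralnick--Kantor~\cite{GK2000}, $q=3$ via the eigenspace count in Lemma~\ref{lempgl}, and $q=2$ via the centralizer formula of Lemma~\ref{lemspo} together with~\cite{AS1976}. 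If you want to avoid the machine computation you will need a replacement for the small-$n$ step that does not rely on a $p$-cycle in $\langle\sigma\rangle$.
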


By Theorem~\ref{thmreduct}, we can determine $G_{k,kn}$ for $k\geq 3$ if it is shown to be $2$-transitive. This is the case when $n$ is not a power of $k$, as the following theorem states.

\begin{theorem}\label{thm2tran}
  The shuffle group $G_{k,kn}$ is $2$-transitive if $k\geq 3$ and $n$ is not a power of $k$.
\end{theorem}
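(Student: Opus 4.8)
The plan is to use the fact that a transitive permutation group is $2$-transitive exactly when a point stabiliser is transitive on the remaining points: I would first check that $G:=G_{k,kn}$ is transitive, and then show that the stabiliser $G_0$ of position $0$ is transitive on $\{1,\dots,kn-1\}$.

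For transitivity (which needs no hypothesis on $k,n$), note that the orbit $\Delta$ of $0$ under $\langle\rho_\tau\mid\tau\in\Sym([k])\rangle\cong S_k$ contains the pile-tops $0,n,\dots,(k-1)n$, and that $\Delta$ is a union of $\langle\rho_\tau\mid\tau\in\Sym([k])\rangle$-orbits, so $\Delta=\{\,i+jn\mid i\in H,\ j\in[k]\,\}$ where $H:=\{\,p\bmod n\mid p\in\Delta\,\}\subseteq[n]$. Using $(i+jn)^\sigma=ik+j$ and closure of $\Delta$ under $\sigma$, one finds that $H$ is closed under all the maps $i\mapsto(ik+j)\bmod n$ with $j\in[k]$; since $0\in H$, iterating these from $\{0\}$ produces $\{0,\dots,k-1\}$, then $\{0,\dots,k^2-1\}\bmod n$, and so on, forcing $H=[n]$ after finitely many steps and hence $\Delta=[kn]$.

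For the point stabiliser I would work with the subgroup $H_0\le G_0$ generated by $\sigma$, by $A_0:=\langle\rho_\tau\mid\tau\in\Sym([k]),\ 0^\tau=0\rangle\cong S_{k-1}$, and by $B_0:=\sigma^{-1}A_0\sigma\cong S_{k-1}$. The elementary facts to exploit are: $\sigma$ fixes $0$ and $kn-1$ and acts on $\{1,\dots,kn-2\}$ as $p\mapsto kp\bmod(kn-1)$ (because $kn\equiv1\pmod{kn-1}$); $A_0$ fixes pile $0$, i.e.\ the positions $0,\dots,n-1$, pointwise and permutes piles $1,\dots,k-1$ preserving heights; $B_0$ sends $ik+j\mapsto ik+j^\tau$, hence fixes the residue class $\{0,k,\dots,(n-1)k\}$ pointwise and permutes the other $k-1$ residue classes preserving indices; and $\sigma$ interchanges the pile partition of $[kn]$ with the residue-class partition. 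I would then trace the $H_0$-orbit $\Delta_1$ of position $1$: its $\langle\sigma\rangle$-orbit is the cyclic subgroup $\langle k\rangle\le(\mathbb{Z}/(kn-1))^{\times}$, which contains $n$ because $n\equiv k^{-1}$; as $n$ is the top of pile $1$, applying $A_0$ puts $n,2n,\dots,(k-1)n$ into $\Delta_1$, and then $\sigma$ puts $1,2,\dots,k-1$ into $\Delta_1$. From this foothold I would enlarge $\Delta_1$ by alternately transporting through $\sigma^{\pm1}$ and spreading by $A_0$ or $B_0$; the hypothesis that $n$ is not a power of $k$ is what guarantees that the orbit $\langle k\rangle$ is not confined to the set of true powers $1,k,k^2,\dots$ (a ``diagonal'' situation that would instead force the imprimitive wreath structure of~\eqref{eq:wr}), so that the spreading steps keep producing new heights and new residues until $\Delta_1=\{1,\dots,kn-1\}$.

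The hard part will be this last enlargement argument. It requires understanding the orbits of multiplication by $k$ modulo $kn-1$ — their number and lengths, and especially how the integers in an orbit distribute among the piles (length-$n$ intervals) and the residue classes (arithmetic progressions of difference $k$) — and proving that these orbits always interlock with the $S_{k-1}$ block-actions of $A_0$ and $B_0$ so as to fuse into one orbit. I expect this to need number-theoretic estimates on $\mathrm{ord}_{kn-1}(k)$ and on various gcds, a case split according to the relative sizes of $k$ and $n$ (and perhaps the prime structure of $k$), and direct verification for a few small exceptional pairs $(k,n)$, in the spirit of the exceptional values $n\in\{6,12\}$ in Theorem~\ref{thm:2k}. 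Finally, since the ranges $k>n$, ``$k$ and $n$ powers of a common integer'', and ``$k$ a power of $2$'' are already covered by Amarra, Morgan and Praeger~\cite{AMP2021}, they may simply be quoted, so that the genuinely new analysis is confined to $k\le n$ with $k$ not a power of $2$.
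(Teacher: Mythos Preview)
Your high-level strategy---show transitivity and then prove that a point stabiliser acts transitively on the remaining $kn-1$ points---matches the paper's, and your generating set for the stabiliser is essentially the paper's $H=\langle\sigma,\rho_\tau\mid\tau\in\Sym([k-1])\rangle$ (the paper fixes $kn-1$ rather than $0$, but this is cosmetic).  The substantive content, however, is precisely the ``hard part'' you leave open, and the route you sketch for it is not the one the paper takes.  You propose to analyse the orbits of $p\mapsto kp\bmod(kn-1)$ and argue that they interlock with the $S_{k-1}$-block actions; but the cycle structure of multiplication by $k$ modulo $kn-1$ is in general intractable, and you give no mechanism beyond unspecified ``number-theoretic estimates on $\mathrm{ord}_{kn-1}(k)$'' and a hoped-for case split.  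Nothing in your outline explains why such estimates would suffice, and quoting~\cite{AMP2021} for $k$ a power of~$2$ or $k>n$ does not touch the generic case.

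The paper's key idea avoids this number theory entirely.  Writing $n=k^st$ with $t>1$ and $k\nmid t$, it encodes each $x\in[kn]$ as $(x_s,\dots,x_0;X)$ with $X=[x]_t^0\in[t]$ and $x_i\in[k]$ the base-$k$ digits of $\lfloor x/t\rfloor$, and sets $T(x)=|\{i:x_i=k-1\}|$.  The whole proof is an induction on $T(x)$: Lemma~\ref{lm:T0} handles $T(x)=0$ directly, and for $T(x)\ge1$ one exhibits an explicit word in $\sigma^{\pm1}$ and the $\rho_\tau$ (Lemmas~\ref{xy}--\ref{lm:Ts} plus the case analysis in the main proof) that sends $x$ to some $y$ with $T(y)<T(x)$.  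Only residues of $X$ and $t$ modulo $k$ enter the case split; the order of $k$ modulo $kn-1$ never appears.  This digit representation, which converts the global orbit question into a local ``reduce one bad digit at a time'' argument, is exactly the missing idea in your proposal.
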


The combination of Theorems~\ref{thmreduct} and \ref{thm2tran} completely solves Conjecture~\ref{conk} affirmatively.
Now that Conjecture~\ref{conk} is confirmed, it together with \cite[Theorem 2]{MM1987} and \cite[Theorem~2.6]{CHMW2005} leads to (see the remark after Conjecture~\ref{conk}) the following complete classification of shuffle groups.



\begin{theorem}\label{classification}
  If $k\geq 3$, then the following hold:
  \begin{enumerate}[\rm(a)]
        \item If $kn=k^m$, then $G_{k,kn}$ is the primitive wreath product $S_k\wr C_m$.
        \item If $k=4$ and $kn=2^m$ with $m$ odd,  then $G_{k,kn}$ is the affine group $\AGL(m,2)$.
        \item If $n$ is not a power of $k$ and either $n$ is odd or both $n/2$ and $k(k-1)/2$ are odd integers, then $G_{k,kn}=S_{kn}$.
        \item In all other cases, $G_{k,kn}=A_{kn}$.
  \end{enumerate}
\end{theorem}


The remainder of this paper is structured as follows. In the next section, we will give definitions and some technical lemmas that will be used in Section~\ref{sec3}. After this preparation, Theorem~\ref{thmreduct} will be proved in Section~\ref{sec3}, and the proof of Theorems~\ref{thm2tran} will be given in Section~\ref{sec4}. Finally, in Section~\ref{sec5}, we conclude the paper with some open problems on the so-called generalised shuffle groups introduced by Amarra, Morgan and Praeger~\cite{AMP2021}.

\section{Preliminaries}\label{sec2}

For a finite group $G$, let $\Z(G)$ denote the centre of $G$, let $\mathbf{O}_p(G)$ denote the largest normal $p$-subgroup of $G$ for a prime $p$, and let $\mathbf{C}_G(g)$ denote the centraliser of an element $g$ in $G$. The \emph{socle} of $G$ is the product of the minimal normal subgroups of $G$, denoted by $\Soc(G)$. The \emph{fixed point ratio} of a permutation $g$ on a finite set $\Omega$, denoted by $\fpr(g)$, is defined by
\[
\fpr(g)=\frac{|\Fix(g)|}{|\Omega|},
\]
where $\Fix(g)=\{\alpha\in\Omega\mid\alpha^g=\alpha\}$.
%

\begin{lemma}\label{lempgl}
Let $g$ be an element of $\PGL(d,3)$ acting on the set of $1$-dimensional subspaces of the vector space $\bbF_3^d$. Then
\[
\fpr(g)=\frac{3^s+3^t-2}{3^d-1}
\]
for some nonnegative integers $s$ and $t$.
\end{lemma}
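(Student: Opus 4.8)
The plan is to work with a preimage of $g$ in $\GL(d,3)$ and to count the fixed projective points as one-dimensional eigenspaces. Fix $\hat g\in\GL(d,3)$ with image $g$ in $\PGL(d,3)$. A one-dimensional subspace $\langle v\rangle$ of $\bbF_3^d$ is fixed by $g$ precisely when it is $\hat g$-invariant, that is, when $v$ is an eigenvector of $\hat g$. The eigenvalues that can occur lie in $\bbF_3\setminus\{0\}=\{1,-1\}$, so $\Fix(g)$ is the union of the set of one-dimensional subspaces contained in $V_1:=\ker(\hat g-1)$ with the set of those contained in $V_{-1}:=\ker(\hat g+1)$.

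The next step is the observation that $V_1\cap V_{-1}=0$: a vector $v$ in the intersection satisfies $v=\hat g v=-v$, hence $2v=0$, hence $v=0$ since $\mathrm{char}\,\bbF_3=3$. Put $s=\dim V_1$ and $t=\dim V_{-1}$. Then the one-dimensional subspaces contained in $V_1$ number $(3^s-1)/2$, those contained in $V_{-1}$ number $(3^t-1)/2$, and the two families are disjoint because a one-dimensional subspace lying in both $V_1$ and $V_{-1}$ would lie in $V_1\cap V_{-1}=0$. Hence
\[
|\Fix(g)|=\frac{3^s-1}{2}+\frac{3^t-1}{2}=\frac{3^s+3^t-2}{2},
\]
while the set of one-dimensional subspaces of $\bbF_3^d$ has size $(3^d-1)/2$; dividing yields the asserted formula.

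Finally I would note that the answer does not depend on the choice of preimage: the only other preimage of $g$ is $-\hat g$, and passing from $\hat g$ to $-\hat g$ interchanges $V_1$ with $V_{-1}$ and therefore swaps $s$ and $t$, which leaves the symmetric expression $(3^s+3^t-2)/(3^d-1)$ unchanged. There is essentially no obstacle in this argument; the only points needing care are that no eigenvalue other than $\pm1$ can occur over $\bbF_3$ and that $V_1$ and $V_{-1}$ meet trivially, both immediate in characteristic $3$. The degenerate situation in which $g$ fixes no point of the projective space is automatically included, corresponding to $s=t=0$.
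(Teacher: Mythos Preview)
Your proof is correct and follows essentially the same approach as the paper: lift to $\hat g\in\GL(d,3)$, identify fixed projective points with eigenvectors for the only possible eigenvalues $\pm1$, and count via the dimensions $s,t$ of the two eigenspaces. Your version is slightly more careful in explicitly verifying $V_1\cap V_{-1}=0$ (the paper's union is tacitly disjoint) and in noting independence from the choice of preimage; one minor quibble is that the reason $2v=0$ forces $v=0$ is that $2$ is invertible in $\bbF_3$ (i.e., the characteristic is not $2$), rather than that the characteristic is $3$ per se.
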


\begin{proof}
Let $\hat{g}\in\GL(d,3)$ such that $g=\hat{g}\Z(\GL(d,3))\in\PGL(d,3)$. Note that a $1$-dimensional subspace $\langle v\rangle$ of $\bbF^d_3$ satisfies $\langle v\rangle^g=\langle v\rangle$ if and only if $v^{\hat{g}}$ is $v$ or $-v$. Therefore,
\[
\Fix(g)=\{\langle v\rangle\mid v\in\bbF^d_3\setminus\{0\},\,v^{\hat{g}}=v\}\cup\{\langle v\rangle\mid v\in\bbF^d_3\setminus\{0\},\,v^{\hat{g}}=-v\},
\]
and hence
\[
\fpr(g)=\frac{|\Fix(g)|}{|\{\langle v\rangle\mid v\in\mathbb{F}^d_3\setminus\{0\}\}|}=\frac{\frac{3^s-1}{2}+\frac{3^t-1}{2}}{\frac{3^d-1}{2}}=\frac{3^s+3^t-2}{3^d-1},
\]
where $s$ and $t$ are the dimensions of the $1$-eigenspace and ($-1$)-eigenspace of $\hat{g}$, respectively.
\end{proof}

Let $V=\bbF_q^d$ be a $d$-dimensional vector space over $\mathbb{F}_q$, where $d\geq3$ and $q$ is even, and we fix an ordered basis of $V$ and associate each element of $\SL(V)$ with its matrix under this basis. For an involution $g\in\SL(V)$, denote by $r(g)$ the number of Jordan blocks of size $2$ in the Jordan canonical form of $g$. Note that two involutions $A$ and $B$ in $\SL(V)$ are conjugate in $\SL(V)$ if and only if $r(A)=r(B)$. For an integer $\ell$ with $1\leq\ell\leq d/2$, denote
\[
A_\ell=\begin{pmatrix}I_\ell & & \\ & I_{d-2\ell} & \\ I_\ell & & I_\ell\end{pmatrix},
\]
where $I_j$ is the $j\times j$ identity matrix. It is clear that $A_\ell$ is an involution in $\SL(V)$ with $r(A_\ell)=\ell$. We call $A_\ell$ the \emph{Suzuki form} of the conjugacy class of $A_\ell$ in $\SL(V)$.

For $\varepsilon\in\{+,-\}$, let $\rmO^{\varepsilon}(2m,q)$ be the general orthogonal group of $\varepsilon$ type on the space $\bbF_q^{2m}$, where $m$ is a positive integer and $q$ is a prime power. For convenience, we set the notation $\Sp(0,q)$ and $\rmO^{\varepsilon}(0,q)$ to be the trivial group. The following lemma is a consequence of~\cite[Sections~7 and~8]{AS1976}.

\begin{lemma}\label{lemspo}
For each involution $g\in\rmO^{\varepsilon}(2m,2)<\Sp(2m,2)$, we have
\[
\frac{|\mathbf{C}_{\Sp(2m,2)}(g)|}{|\mathbf{C}_{\rmO^{\varepsilon}(2m,2)}(g)|}
=\frac{|\Sp(2m-2r,2)|\cdot|\mathbf{O}_2\left(\mathbf{C}_{\Sp(2m,2)}(g)\right)|}
{|\rmO^{\varepsilon}(2m-2r,2)|\cdot|\mathbf{O}_2\left(\mathbf{C}_{\rmO^{\varepsilon}(2m,2)}(g)\right)|}
\]
for some positive integer $r\leq m$.
\end{lemma}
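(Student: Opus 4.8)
The plan is to extract the structure of both involution centralizers from~\cite[Sections~7 and~8]{AS1976}, where Aschbacher and Seitz classify the involutions of $\Sp(2m,2)$ and of its orthogonal subgroups $\rmO^{\varepsilon}(2m,2)$ and describe their centralizers. An involution $g\in\rmO^{\varepsilon}(2m,2)<\Sp(2m,2)$ belongs to one of finitely many types (the types $a_\ell$, $b_\ell$, $c_\ell$ in Aschbacher--Seitz notation, with the even-index classes splitting), and for each type one reads off factorizations in which $\mathbf{C}_{\Sp(2m,2)}(g)$ has normal unipotent subgroup $\mathbf{O}_2(\mathbf{C}_{\Sp(2m,2)}(g))$ with quotient $\Sp(2m-2r,2)\times C$, while $\mathbf{C}_{\rmO^{\varepsilon}(2m,2)}(g)$ has normal unipotent subgroup $\mathbf{O}_2(\mathbf{C}_{\rmO^{\varepsilon}(2m,2)}(g))$ with quotient $\rmO^{\varepsilon}(2m-2r,2)\times C$, for a positive integer $r\le m$ depending only on the type of $g$ and for one and the same ``small'' reductive factor $C$ in the two cases. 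Granting this, the lemma is immediate: each centralizer order is the product of the orders of its three displayed factors, and forming the quotient cancels the common factor $|C|$, leaving exactly the right-hand side of the claimed identity.

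So the content reduces to justifying three features of these factorizations: (i) the parameter $r$ is common to the two groups; (ii) the ``large'' classical factor on a complementary nondegenerate $g$-invariant subspace $V_0$ of dimension $2m-2r$ is symplectic inside $\Sp(2m,2)$ and orthogonal of the \emph{same} type $\varepsilon$ inside $\rmO^{\varepsilon}(2m,2)$; and (iii) the small factor $C$ coincides. For~(i), write $V=V_0\perp V_1$ with $V_1$ the smallest nondegenerate $g$-invariant subspace carrying the moving part $(g-1)V$; the integer $r$ with $\dim V_1=2r$ is governed by $\mathrm{rank}(g-1)$ together with the isotropy type of $(g-1)V$, both intrinsic to $g$ as a linear map and to the symplectic form, hence independent of a choice of compatible quadratic form. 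For~(ii), the analysis of~\cite{AS1976} shows that the restriction of the quadratic form to $V_1$ is always of plus type, so that $V_0$ is nondegenerate of the same Witt type $\varepsilon$ as $V$; the complementary factor is then $\Sp(V_0)\cong\Sp(2m-2r,2)$ on the symplectic side and $\rmO(V_0)\cong\rmO^{\varepsilon}(2m-2r,2)$ on the orthogonal side. For~(iii), one checks type by type, using the tables of~\cite[Sections~7 and~8]{AS1976}, that the reductive part of the centralizer of $g|_{V_1}$ is the same small group whether one works with the symplectic form or with the quadratic form on $V_1$: in each case $C$ is trivial, or $C_2$, or one of the small classical groups related by the characteristic-$2$ isomorphisms $\Sp(2,2)\cong\rmO(3,2)\cong\rmO^{-}(2,2)$, which is exactly what produces the match.

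The main obstacle is step~(iii): there is no uniform conceptual reason for the two small reductive factors to agree, so one must run through each Aschbacher--Seitz type (including the split even-index classes, where $(g-1)V$ is totally singular, and the classes where $(g-1)V$ has a one-dimensional radical) and verify directly that the symplectic and orthogonal computations yield equal-order small factors, and that the unipotent radicals $\mathbf{O}_2(\mathbf{C}_{\Sp(2m,2)}(g))$ and $\mathbf{O}_2(\mathbf{C}_{\rmO^{\varepsilon}(2m,2)}(g))$ account for exactly what remains. This is routine but must be done carefully, since it is precisely the place where the dependence on $\varepsilon$ is confined to the two factors appearing in the statement; once the bookkeeping is in place, no computation beyond what is already in~\cite{AS1976} is needed.
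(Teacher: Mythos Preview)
Your plan---read off the centralizer structure from \cite[Sections~7 and~8]{AS1976} and compare---is exactly the paper's approach. The gap is in your template for the reductive quotients. Your claim~(ii), that in every case the quotient of $\mathbf{C}_{\rmO^{\varepsilon}(2m,2)}(g)$ by its $\mathbf{O}_2$ is $\rmO^{\varepsilon}(2m-2r,2)\times C$ with the same $r$ and $C$ as on the symplectic side, is correct only for the type $a_\ell$ (where $r=\ell$ and $C=\Sp(\ell,2)$, by \cite[(7.9),(8.6)]{AS1976}). For the types $b_\ell$ and $c_\ell$, \cite[(7.10),(7.11),(8.7),(8.8)]{AS1976} give that \emph{both} reductive quotients are the \emph{same} product of symplectic groups, namely $\Sp(\ell-1,2)\times\Sp(2m-2\ell,2)$ or $\Sp(\ell-2,2)\times\Sp(2m-2\ell,2)$; no factor $\rmO^{\varepsilon}(2m-2\ell,2)$ appears in the orthogonal centralizer. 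In particular, your assertion that the quadratic form on $V_1$ is always of plus type (so that $V_0$ inherits type~$\varepsilon$) fails for these classes, and your geometric definition of $r$ via $\dim V_1=2r$ does not yield a factorization of the shape you describe.

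The fix, which is what the paper does, is to treat the two situations separately: for type $a_\ell$ your template works with $r=\ell$; for types $b_\ell$ and $c_\ell$ the two reductive quotients coincide, so the centralizer ratio is simply $|\mathbf{O}_2(\mathbf{C}_{\Sp(2m,2)}(g))|/|\mathbf{O}_2(\mathbf{C}_{\rmO^{\varepsilon}(2m,2)}(g))|$, and one records this in the form of the lemma by taking $r=m$ (so that $\Sp(2m-2r,2)$ and $\rmO^{\varepsilon}(2m-2r,2)$ are both trivial). Once you split the argument this way, no further work beyond the cited results is needed.
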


\begin{proof}
Write $G=\Sp(2m,2)$ and $H=\rmO^{\varepsilon}(2m,2)$.
Since $g\in G$, we see that there exists a basis of $\mathbb{F}^{2m}_2$ as in (1), (2) or (3) of~\cite[(7.6)]{AS1976} such that $g$ is in Suzuki form under this basis. For convenience, we say that $g$ has form $a_\ell$, $b_\ell$ or $c_\ell$, if the basis is chosen as in (1), (2) or (3) of~\cite[(7.6)]{AS1976}, respectively.

First assume that $g$ has form $a_\ell$ (in this case, $\ell$ is even).
It follows from~\cite[(7.9)]{AS1976} that there exists a homomorphism from $\mathbf{C}_G(g)$ onto $\Sp(\ell,2)\times\Sp(2m-2\ell,2)$ with kernel $\mathbf{O}_2\left(\mathbf{C}_{G}(g)\right)$. Therefore,
\[
\frac{|\mathbf{C}_{G}(g)|}{|\mathbf{O}_2\left(\mathbf{C}_{G}(g)\right)|}=|\Sp(\ell,2)\times\Sp(2m-2\ell,2)|.
\]
Moreover, \cite[(8.6)]{AS1976} shows that there is a homomorphism from $\mathbf{C}_{H}(g)$ to $\Sp(\ell,2)\times\rmO^{\varepsilon}(2m-2\ell,2)$ with kernel $\mathbf{O}_2\left(\mathbf{C}_{H}(g)\right)$, and so
\[
\frac{|\mathbf{C}_{H}(g)|}{|\mathbf{O}_2\left(\mathbf{C}_{H}(g)\right)|}=|\Sp(\ell,2)\times\rmO^{\varepsilon}(2m-2\ell,2)|.
\]
As a consequence,
\[
\frac{|\mathbf{C}_{G}(g)|}{|\mathbf{C}_{H}(g)|}
=\frac{|\Sp(2m-2\ell,2)|\cdot|\mathbf{O}_2\left(\mathbf{C}_{G}(g)\right)|}
{|\rmO^{\varepsilon}(2m-2\ell,2)|\cdot|\mathbf{O}_2\left(\mathbf{C}_{H}(g)\right)|}.
\]

Now assume that $g$ has form $b_\ell$ or $c_\ell$ (in this case, $\ell$ is odd or even, respectively).
Similarly, we derive from~\cite[(7.10)~and~(7.11)]{AS1976} and~\cite[(8.7)~and~(8.8)]{AS1976} that
\[
\frac{|\mathbf{C}_{G}(g)|}{|\mathbf{O}_2\left(\mathbf{C}_{G}(g)\right)|}=|\Sp(\ell-1,2)\times\Sp(2m-2\ell,2)|=
\frac{|\mathbf{C}_{H}(g)|}{|\mathbf{O}_2\left(\mathbf{C}_{H}(g)\right)|}
\]
or
\[
\frac{|\mathbf{C}_{G}(g)|}{|\mathbf{O}_2\left(\mathbf{C}_{G}(g)\right)|}=|\Sp(\ell-2,2)\times\Sp(2m-2\ell,2)|=
\frac{|\mathbf{C}_{H}(g)|}{|\mathbf{O}_2\left(\mathbf{C}_{H}(g)\right)|}.
\]
It follows that
\[
\frac{|\mathbf{C}_{G}(g)|}{|\mathbf{C}_{H}(g)|}=\frac{|\mathbf{O}_2\left(\mathbf{C}_{G}(g)\right)|}{|\mathbf{O}_2\left(\mathbf{C}_{H}(g)\right)|}
=\frac{|\Sp(0,2)|\cdot|\mathbf{O}_2\left(\mathbf{C}_{G}(g)\right)|}
{|\rmO^{\varepsilon}(0,2)|\cdot|\mathbf{O}_2\left(\mathbf{C}_{H}(g)\right)|}.
\]
This completes the proof.
\end{proof}

\section{Fixed point ratio and shuffle groups}\label{sec3}

In this section, we prove the reduction theorem (Theorem~\ref{thmreduct}), which reduces the proof of Conjecture~\ref{conk} to that of the $2$-transitivity of $G_{k,kn}$. Recall from~\eqref{Eqn:rho} that for $\tau\in\Sym([k])$, the permutation $\rho_\tau\in G_{k,kn}$ maps $i+jn$ to $i+j^\tau n$ for all $i\in[n]$ and $j\in[k]$. This leads to the following result on the fixed point ratio of $\rho_\tau$, an observation that is the basis of our argument throughout this section.

\begin{lemma}\label{lemfpr}
For each $\tau\in\Sym([k])$ we have $\fpr(\tau)=\fpr(\rho_{\tau})$. In particular, if $\tau$ is a transposition, then $\fpr(\rho_{\tau})=(k-2)/k$.
\end{lemma}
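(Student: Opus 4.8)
The plan is to compute $\Fix(\rho_\tau)$ directly from the defining formula $(i+jn)^{\rho_\tau}=i+j^\tau n$. A position $i+jn$ with $i\in[n]$ and $j\in[k]$ is fixed by $\rho_\tau$ precisely when $i+j^\tau n = i+jn$, that is, when $j^\tau = j$; the index $i$ plays no role. Hence the fixed positions are exactly those of the form $i+jn$ with $i\in[n]$ arbitrary and $j\in\Fix(\tau)\subseteq[k]$, giving $|\Fix(\rho_\tau)| = n\cdot|\Fix(\tau)|$. Dividing by $|[kn]| = kn$ yields
\[
\fpr(\rho_\tau) = \frac{n\cdot|\Fix(\tau)|}{kn} = \frac{|\Fix(\tau)|}{k} = \fpr(\tau),
\]
since $\tau$ acts on the $k$-element set $[k]$.

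For the ``in particular'' clause, I would simply specialize: if $\tau\in\Sym([k])$ is a transposition, then it fixes exactly $k-2$ of the $k$ points, so $\fpr(\tau) = (k-2)/k$, and therefore $\fpr(\rho_\tau) = (k-2)/k$ as well.

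There is essentially no obstacle here: the statement is an immediate unwinding of the definition in~\eqref{Eqn:rho}, and the only thing to be careful about is tracking that the ``free'' coordinate $i$ ranges over all of $[n]$ regardless of whether $j$ is fixed, which is what produces the factor of $n$ that cancels. The main value of stating this as a lemma is that it transfers a completely elementary fact about $\tau\in\Sym([k])$ — the fixed point ratio of a transposition — to the much larger permutation $\rho_\tau$ of $[kn]$, which is the quantity actually needed when applying bounds on fixed point ratios in primitive groups later in Section~\ref{sec3}.
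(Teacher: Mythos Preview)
Your proof is correct and follows exactly the approach the paper intends: the paper does not give a formal proof of this lemma at all, merely remarking before the statement that the formula $(i+jn)^{\rho_\tau}=i+j^\tau n$ ``leads to'' the result as an immediate observation. Your write-up simply makes explicit the one-line computation the paper leaves to the reader.
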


A permutation group $G$ on a set $\Omega$ is said to be \emph{primitive} if the only partitions preserved by $G$ are $\{\Omega\}$ and $\{\{\alpha\}\mid\alpha\in\Omega\}$. It is well known and easy to see that every $2$-transitive group is primitive. An \emph{affine} primitive group is a subgroup of $\AGL(d,p)$ that contains the socle of $\AGL(d,p)$, where $d$ is a positive integer and $p$ is prime.

\begin{lemma}\label{lemaffine}
Suppose that $G_{k,kn}$ is an affine primitive group with $k\geq3$. Then either $k=3$ and $n=1$, or $k=4$ and $n$ is a power of $2$.
\end{lemma}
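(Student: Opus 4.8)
The plan is to use Lemma~\ref{lemfpr} to pin down $k$, and then to read off $n$ directly from the affine structure. Write $G=G_{k,kn}$. By definition of an affine primitive group, $\bbF_p^d\trianglelefteq G\le\AGL(d,p)$ for some prime $p$ and some $d\ge1$, where $\bbF_p^d=\Soc(\AGL(d,p))$ is the translation subgroup; comparing point sets gives $kn=p^d$, and since $k\ge2$ divides $p^d$ we get $k=p^a$ and $n=p^b$ with $a\ge1$ and $b=d-a\ge0$. In particular $p$ is the unique prime divisor of $k$, so it remains to show that $(k,n)=(3,1)$ or $(k,n)=(4,2^f)$ for some $f\ge0$.

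I would first observe that in any affine group a non-identity element has a severely restricted fixed-point ratio: if $1\ne g\in H$ with $\bbF_p^d\le H\le\AGL(d,p)$, then either $g$ is fixed-point-free, or, after conjugating by a translation so that $g$ fixes the origin, $g\in\GL(d,p)$ has linear part $M\ne I$ and $\Fix(g)=\ker(M-I)$ of size at most $p^{d-1}$; hence $\fpr(g)\in\{0\}\cup\{p^{-r}\mid 1\le r\le d\}$. I then apply this with $g=\rho_\tau$ for a transposition $\tau\in\Sym([k])$: here $\rho_\tau\ne1$ since it moves the top cards of two piles, and $\fpr(\rho_\tau)=(k-2)/k\ne0$ by Lemma~\ref{lemfpr} together with $k\ge3$, so $(k-2)/k=p^{-r}$ for some $r\ge1$. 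Substituting $k=p^a$ and clearing denominators gives $p^a-p^{a-r}=2$, whence $r\le a$ and $p^{a-r}(p^r-1)=2$; since $p$ is prime the only possibilities are $p^{a-r}=1$, $p^r=3$ (so $p=3$, $a=1$, $k=3$) and $p^{a-r}=2$, $p^r=2$ (so $p=2$, $a=2$, $k=4$).

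It then remains to determine $n$. For $k=4$ we have $p=2$, so $n=2^b$ is a power of $2$, as required. For $k=3$ we have $p=3$, $a=1$, and $n=3^b$, and I need $b=0$. If $b\ge1$, then $n=3^b$ is a power of $k=3$, so by~\eqref{eq:wr} $G=G_{3,3^{b+1}}=S_3\wr C_{b+1}$ in its product action on $3^{b+1}$ points; since $b+1\ge2$ that action preserves the number of coordinates in which two tuples differ, so it is not $2$-transitive (a pair differing in one coordinate cannot be mapped to a pair differing in two). This last step is the main obstacle, and it is precisely where the argument must use more than primitivity: the group $S_3\wr C_{b+1}$ is itself affine primitive — its socle $C_3^{b+1}$ is elementary abelian and regular — so primitivity alone does not rule out $b\ge1$; what rules it out is the $2$-transitivity of $G_{k,kn}$ that is in force here. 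Granting that, $b=0$, hence $n=1$, which completes the proof.
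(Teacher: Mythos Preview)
Your argument tracks the paper's almost exactly through the determination of $k\in\{3,4\}$ via the equation $\fpr(\rho_\tau)=(k-2)/k=p^{-r}$; the paper phrases this by first choosing $\tau$ so that $\rho_\tau$ fixes $0$, whence $\Fix(\rho_\tau)$ is literally a subspace of $V$, but the content is identical.

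Where you diverge is exactly the right place, and your diagnosis is sharper than the paper's. The paper closes the $k=3$ case with the bare assertion ``Since $G$ is affine, we conclude that $d=1$'', but as you point out, this does not follow from the stated hypothesis: for every $d\ge1$ the primitive wreath product $\Sy_3\wr C_d$ contains the regular normal subgroup $C_3^{\,d}$, and its point stabiliser $C_2\wr C_d$ acts irreducibly on $\bbF_3^{\,d}$, so $\Sy_3\wr C_d$ \emph{is} affine primitive. In particular $G_{3,9}=\Sy_3\wr C_2$ already shows that the lemma, read literally, fails. Your proposed repair---strengthening the hypothesis to $2$-transitivity and noting that $\Sy_3\wr C_d$ preserves Hamming distance on $[3]^d$ and hence is not $2$-transitive for $d\ge2$---is correct and is precisely what the only application of the lemma (in the proof of Theorem~\ref{thmreduct}) actually provides. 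So your write-up is not a proof of the lemma as stated, because nothing can be; but it proves the version the paper needs, and it correctly locates the gap that the paper's own proof glosses over.
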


\begin{proof}
Let $G=G_{k,kn}$, and let $V$ be a $d$-dimension vector space over $\mathbb{F}_p$ such that $G\leq\AGL(V)$, where $d$ is a positive integer and $p$ is prime. Then $kn=|V|=p^d$. By~\eqref{Eqn:rho}, there is a transposition $\tau\in\Sym([k])$ such that $\rho_\tau$ fixes the zero vector $0$ in $V$. It follows that $\rho_\tau\in G_0\leq\GL(V)$. Since $\Fix(\rho_\tau)=\{v\in V\mid v^{\rho_\tau}=v\}$ is a subspace of $V$, we have $|\Fix(\rho_\tau)|=p^f$ for some nonnegative integer $f$. Thus, as $\tau$ is a transposition, we derive from Lemma~\ref{lemfpr} that
\[
\frac{k-2}{k}=\fpr(\rho_\tau)=\frac{|\Fix(\rho_\tau)|}{|V|}=\frac{p^f}{p^d}=\frac{1}{p^{d-f}}.
\]
Since $k\geq3$, this implies that either $k=p=3$, or $k=4$ and $p=2$. For the latter, $n=|V|/k=2^{d-2}$ is a power of $2$. Now assume that $k=p=3$. Then $n=|V|/k=p^d/k=3^{d-1}$, and so~\eqref{eq:wr} gives $G=\Sy_3\wr C_{d}$. Since $G$ is affine, we conclude that $d=1$, which indicates that $n=3^{d-1}=1$.
\end{proof}

A group is said to be \emph{almost simple} if its socle is a nonabelian simple group. It follows from the well-known Burnside's Theorem \cite[\S 154, Theorem XIII]{B1911} that $2$-transitive groups are either affine or almost simple.

\begin{proof}[Proof of Theorem~$\ref{thmreduct}$]
Let $G=G_{k,kn}$ be $2$-transitive with $k\geq3$. If $G$ is affine, then according to Lemma~\ref{lemaffine}, either $k=3$ and $n=1$, or $k=4$ and $n$ is a power of $2$. The former leads to $G=G_{3,3}=\Sy_3$, which satisfies the conclusion of the theorem. For the latter, since $G$ is $2$-transitive, we conclude from~\eqref{eq:wr} that $n$ is not a power of $4$, and so $n$ is an odd power of $2$, again satisfying the conclusion of the theorem. Thus we may assume that $G$ is almost simple for the rest of the proof.

First assume that $k\geq4$. Take a transposition $\tau\in\Sym([k])$. By Lemma~\ref{lemfpr}, we have
\[
\fpr(\rho_\tau)=\frac{k-2}{k}\geq\frac{1}{2}.
\]
Then since $G$ is $2$-transitive, it follows from~\cite[Theorem~1]{GM1998} that either $G\geq\A_{kn}$, or
\begin{equation}\label{eq:6}
\fpr(\rho_\tau)=\frac{1}{2}+\frac{1}{2(2^r\pm1)}\ \text{ for some }\,r\geq3.
\end{equation}
The former satisfies the conclusion of the theorem. Now suppose that~\eqref{eq:6} holds. It follows that
\[
\fpr(\rho_\tau)\leq\frac{1}{2}+\frac{1}{2(2^3-1)}=\frac{4}{7}<\frac{3}{5}.
\]
This together with $\fpr(\rho_\tau)=(k-2)/k$ implies that $k<5$. Thus $k=4$, which in turn yields $\fpr(\rho_\tau)=(k-2)/k=1/2$, contradicting~\eqref{eq:6}.

In the following assume that $k=3$. For convenience in the coming discussion, we first calculate $G_{3,3n}$ for $n\leq92$ by computation in~\textsc{Magma}~\cite{Magma}. It turns out that, for these values of $n$, if $n$ is not a power of $3$ then $G_{3,3n}$ contains $\A_{3n}$. Note by~\eqref{eq:wr} that if $n$ is a power of $3$ then $G_{3,3n}$ is not $2$-transitive. Thus in the remainder of the proof we assume $n>92$.

Suppose for a contradiction that $G$ does not contain $\A_{3n}$. Since $n>92$, it follows from the list of almost simple $2$-transitive groups (see~\cite[Table~7.4]{Cameron1999}) that $\Soc(G)$ is a simple group of Lie type, say, over $\mathbb{F}_q$. In the following we divide the proof into four cases according to $q>4$, $q=4$, $q=3$ or $q=2$. Take a transposition $\tau\in\Sym([3])$. We have  $\fpr(\rho_\tau)=1/3$ by Lemma~\ref{lemfpr}.

\textsf{Case~1}: $q>4$. In this case, $\fpr(\rho_\tau)=1/3>4/(3q)$. Then since $G$ is a $2$-transitive group on $3n>276$ points, it follows from~\cite[Theorem~1]{LS1991} that $\Soc(G)=\PSL(2,q)$ and $\fpr(\rho_\tau)$ is either $2/(q+1)$ or $(q_0+1)/(q+1)$, where $q_0=q^{1/r}$ is a prime power for some integer $r\geq2$. This together with $\fpr(\rho_\tau)=1/3$ implies that $1/3=2/(q+1)$ or $1/3\leq(\sqrt{q}+1)/(q+1)$. However, this leads to $q\leq9$, and hence $3n=q+1\leq10$, a contradiction.

\textsf{Case~2}: $q=4$. In this case, we see from~\cite[Table~7.4]{Cameron1999} that $G$ is a subgroup of either $\PGaU(3,4)$ or $\PGaL(d,4)$ with $d\geq2$, which together with $n>92$ implies that $G\leq\PGaL(d,4)$ with $d\geq3$. Then according to~\cite[Proposition~3.1]{GK2000}, the fixed point ratio of a non-identity element in $G$ is less than
\[
\min\left\{\frac{1}{2},\ \frac{1}{4}+\frac{1}{4^{d-1}}\right\}=\frac{1}{4}+\frac{1}{4^{d-1}}\leq\frac{1}{4}+\frac{1}{4^2}<\frac{1}{3},
\]
contradicting $\fpr(\rho_\tau)=1/3$.

\textsf{Case~3}: $q=3$. Recall that $G$ is a $2$-transitive group on $3n>276$ points. Then we see from~\cite[Table~7.4]{Cameron1999} that
$G$ is a subgroup of $\PGL(d,3)$ with $d\geq6$. It follows from Lemma~\ref{lempgl} that
\[
\frac{3^s+3^t-2}{3^d-1}=\fpr(\rho_\tau)=\frac{1}{3}
\]
for some nonnegative integers $s$ and $t$. This yields
\[
3(3^s+3^t-2)=3^d-1,
\]
which is not possible.

\textsf{Case~4}: $q=2$. In this case, we see from the list of almost simple $2$-transitive groups that either $G=\PSL(d,2)$ with $d\geq3$, or $G$ is the group $\Sp(2m,2)$ for some $m\geq3$ with point stabiliser $\mathrm{O}^\pm(2m,2)$.

First assume $G=\PSL(d,2)$ with $d\geq3$. Then $G$ can be viewed as $\GL(d,2)$ acting on the set of nonzero vectors. In this way, $\Fix(\rho_\tau)=\{v\in\mathbb{F}_2^d\mid v^x=v\}\setminus\{0\}$, and so $|\Fix(\rho_\tau)|=2^r-1$ for some nonnegative integer $r\leq d$.
It then follows from $\fpr(\rho_\tau)=1/3$ that
\[
\frac{1}{3}=\fpr(\rho_\tau)=\frac{2^r-1}{2^d-1}.
\]
This yields
\begin{equation}\label{eq:case4}
3\cdot2^r=2^d+2.
\end{equation}
Since the right hand side of~\eqref{eq:case4} is congruent to $2$ modulo $4$, we deduce $3\cdot2^r\equiv2\pmod{4}$ and thus $r=1$. However, this leads to $6=2^d+2$, contradicting $d\geq3$.

Now assume $G=\Sp(2m,2)$ for some $m\geq3$ with point stabiliser $\mathrm{O}^\varepsilon(2m,2)$, where $\varepsilon\in\{+,-\}$.
Note that $\rho_\tau$ is an involution with nonempty fixed point set. Let $H$ be a point stabiliser of $G$ containing $\rho_\tau$.
According to~\cite[(8.5)]{AS1976}, two involutions in $H$ are conjugate in $G$ if and only if they are conjugate in $H$. Hence $(\rho_\tau)^H=(\rho_\tau)^G\cap H$. Then by~\cite[Lemma~1.2(iii)]{Burness2018} we have
\[
\fpr(\rho_\tau)=\frac{|(\rho_\tau)^G\cap H|}{|(\rho_\tau)^G|}=\frac{|(\rho_\tau)^{H}|}{|(\rho_\tau)^{G}|}
=\frac{|H|\cdot|\mathbf{C}_{G}(\rho_\tau)|}{|G|\cdot|\mathbf{C}_{H}(\rho_\tau)|}
=\frac{|\mathrm{O}^\varepsilon(2m,2)|}{|\Sp(2m,2)|}\cdot\frac{|\mathbf{C}_{G}(\rho_\tau)|}{|\mathbf{C}_{H}(\rho_\tau)|}.
\]
This in conjunction with Lemma~\ref{lemspo} implies that
\[
\fpr(\rho_\tau)=\frac{|\mathrm{O}^\varepsilon(2m,2)|}{|\Sp(2m,2)|}\cdot\frac{|\Sp(2m-2r,2)|}{|\rmO^{\varepsilon}(2m-2r,2)|}
\cdot\frac{|\mathbf{O}_2(\mathbf{C}_{G}(\rho_\tau)|}{|\mathbf{O}_2(\mathbf{C}_{H}(\rho_\tau))|}
\]
for some positive integer $r\leq m$. According to whether $r=m$ or $r<m$, we deduce that
\[
\fpr(\rho_\tau)=\frac{1}{2^{m-1}(2^m+\varepsilon1)}\cdot\frac{|\mathbf{O}_2(\mathbf{C}_{G}(\rho_\tau)|}{|\mathbf{O}_2(\mathbf{C}_{H}(\rho_\tau))|}\ \text{ or }\ \frac{2^{m-r-1}(2^{m-r}+\varepsilon1)}{2^{m-1}(2^m+\varepsilon1)}
\cdot\frac{|\mathbf{O}_2(\mathbf{C}_{G}(\rho_\tau)|}{|\mathbf{O}_2(\mathbf{C}_{H}(\rho_\tau))|}.
\]
Since $\fpr(\rho_\tau)=1/3$ and both $|\mathbf{O}_2(\mathbf{C}_{G}(\rho_\tau)|$ and $|\mathbf{O}_2(\mathbf{C}_{H}(\rho_\tau))|$ are powers of $2$, it follows that
\[
\frac{1}{3}=\frac{1}{2^m+\varepsilon1}\ \text{ or }\ \frac{2^{m-r}+\varepsilon1}{2^m+\varepsilon1}.
\]
The former is not possible as $m\geq3$. For the latter, we obtain
\[
3\cdot2^{m-r}+\varepsilon2=2^m\equiv0\pmod{4}
\]
and thus $m-r=1$, which in turn leads to $m=3$ and $\varepsilon=+$. However, this implies that $3n=|\Sp(6,2)|/|\rmO^+(6,2)|=36$, contradicting $n>92$.
\end{proof}

\section{$2$-transitivity}\label{sec4}

We will prove Theorem~\ref{thm2tran} in this section.
Throughout this section, let $n=k^st$ where $s$ and $t$ are integers satisfying $s\geq 0$, $t>1$ and $k\nmid t$. For a nonnegative integer $m$ and a positive integer $\ell$, we use $[m]_{\ell}^{0}$ and $[m]_{\ell}^{1}$ to denote the remainder and quotient of $m$ divided by $\ell$, that is,
\[
m=\ell[m]_{\ell}^{1}+[m]_{\ell}^{0}
\]
with $0\leq[m]_{\ell}^{0}\leq\ell-1$.
For every $x\in[kn]$ (note that $0\le x<k^{s+1}t$), we write $[x]_{t}^{1}$ in base $k$ as follows: $[x]_{t}^{1}=k^sx_s+\cdots+kx_1+x_0$ where $x_i\in[k]$ for every $i\in[s+1]$. Therefore, $x$ can be uniquely written as
\begin{equation*}
x=(k^sx_s+\cdots+kx_1+x_0)t+[x]_{t}^{0}.
\end{equation*}
For convenience, we identify $x$ with $(x_s,\ldots,x_1,x_0;X)$ where $X=[x]_{t}^{0}$, and sometimes we mix the two notations when doing addition.
For example,
\begin{equation*}
(x_s,\ldots,x_3,0,1,1;t-1)+k^2t+2=(x_s,\ldots,x_3,1,1,2;1).
\end{equation*}
Recall $(i+jn)^{\sigma}=ki+j$ for all $i\in[n]$ and $j\in[k]$. One can obtain inductively that
\begin{align}
  (x_s,\ldots,x_1,x_0;X)^{\sigma^{i}}  &= \left(\sum_{j=i}^s k^{j}x_{j-i}\right)t+k^iX+\sum_{j=0}^{i-1}k^{i-1-j}x_{s-j}\nonumber\\
  &=(x_{s-i},\ldots,x_1,x_0,0,\ldots,0;0)+k^iX+\sum_{j=0}^{i-1}k^{i-1-j}x_{s-j}\label{spower}
\end{align}
for all $i\in [s+2]$ (when $i=s+1$, the tuple $(x_{s-i},\ldots,x_1,x_0,0,\ldots,0;0)$ in equation~\eqref{spower} is to be understood as $0$). In particular,
\begin{equation}\label{eq:sigmai}
     (x_s,\ldots,x_1,x_0;X)^{\sigma}=(x_{s-1},\ldots,x_0,[kX+x_s]_{t}^{1};[kX+x_s]_{t}^{0}),
\end{equation}
and thus
\begin{equation}\label{inverse}
(x_s,\ldots,x_1,x_0;X)^{\sigma^{-1}}=([x_0t+X]_{k}^{0},x_{s},\ldots,x_1;[x_0t+X]_{k}^{1}).
\end{equation}
Recalling from~\eqref{Eqn:rho} that $(i+jn)^{\rho_{\tau}}=i+j^{\tau}n$,
we have
\begin{equation}\label{ta}
(x_s,\ldots,x_1,x_0;X)^{\rho_{\tau}}=(k^{s-1}x_{s-1}+\cdots+kx_1+x_0)t+X+x_s^\tau n=(x_{s}^{\tau},x_{s-1},\ldots,x_1,x_0;X)
\end{equation}
for every $(x_s,\ldots,x_1,x_0;X)\in[kn]$. By~\eqref{spower} and~\eqref{ta}, it is clear that
\begin{equation}
\label{tau}
(x_s,\ldots,x_1,x_0;X)^{\sigma^{i}\rho_{\tau}\sigma^{-i}}
=(x_s,\ldots,x_{s-i+1},x_{s-i}^{\tau},x_{s-i-1},
\ldots,x_1,x_0;X).
\end{equation}

Consider the subgroup $H:=\langle\sigma,\rho_{\tau}\mid \tau \in \mathrm{Sym}([k-1])\rangle$ of $G_{k,kn}$,
which is contained in the stabiliser of $kn-1$. Our strategy for proving Theorem \ref{thm2tran} is to prove that $H$ is transitive on $[kn-1]$.
We use $(i,j)\in \Sym([k])$ with $i\neq j$ to denote the transposition swapping $i$ and $j$. For each $x\in[k]$, let $(0,x)$ denote the permutation of $[k]$ sending $x$ to $0$ and $0$ to $x$ while fixing $[k]\setminus \{0,x\}$ pointwise. In particular, $(0,x)$ coincides with the above notation for a transposition if $x\neq 0$ and is the identity permutation if $x=0$. This somewhat cumbersome notation avoids discussing whether $x=0$ in the following.

Let $x=(x_s,x_{s-1},\ldots,x_0;X)\in[kn]$.
Write $\alpha_i=\sigma^{i}\rho_{(0,1)}\sigma^{-i}$ for every $i\in[s+1]$.
By~\eqref{tau},
\begin{equation*}
x^{\alpha_i}
=(x_s,\ldots,x_{s-i+1},x_{s-i}^{(0,1)},x_{s-i-1},\ldots,x_0;X).
\end{equation*}
Set $\beta_{\tau}=\sigma^{-1}\rho_{\tau}\sigma\in H$ for $\tau\in \mathrm{Sym}([k-1])$. Using~\eqref{eq:sigmai}--\eqref{ta}, it is straightforward to check that
\[
x^{\beta_{\tau}}=x+([x_0t+X]_k^0)^{\tau}-[x_0t+X]_k^0.
\]
We will use the above two formulas for $\alpha_i$ and $\beta_{\tau}$ repeatedly without any reference.
Let
\[
  T(x)=|\{i\in[s+1]\mid x_i=k-1\}|.
\]

\begin{lemma}\label{lm:T0}
If $T(x)=0$, then $x\in0^{H}$.
\end{lemma}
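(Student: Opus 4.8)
The plan is to reduce a general element $x$ with $T(x)=0$ to the single ``pure'' form $(0,\dots,0;X)$, and then to prove that every such pure form lies in $0^{H}$ by induction on $X$. For the reduction, note that since $T(x)=0$ every digit $x_{i}$ with $0\le i\le s$ belongs to $\{0,\dots,k-2\}$, and by~\eqref{tau} the element $\sigma^{i}\rho_{\tau}\sigma^{-i}\in H$ (for $\tau\in\Sym([k-1])$) replaces $x_{s-i}$ by $x_{s-i}^{\tau}$ while fixing all other digits and $X$. Since $\Sym([k-1])$ induces the full symmetric group on $\{0,\dots,k-2\}$, applying these operations for $i=0,1,\dots,s$ in turn sends $x_{s},\dots,x_{1},x_{0}$ to $0$; hence $x$ lies in the same $H$-orbit as $(0,\dots,0;X)$ with $X=[x]_{t}^{0}$, and it suffices to prove $(0,\dots,0;X)\in0^{H}$ for all $X\in[t]$.

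I would prove this by strong induction on $X$, the case $X=0$ being trivial. Suppose $X\ge1$ and first that $[X]_{k}^{0}\le k-2$. By~\eqref{inverse}, $(0,\dots,0;X)^{\sigma^{-1}}=([X]_{k}^{0},0,\dots,0;[X]_{k}^{1})$, and since the new leading digit $[X]_{k}^{0}$ lies in $\{0,\dots,k-2\}$ a suitable $\rho_{\tau}$ resets it to $0$, giving $(0,\dots,0;[X]_{k}^{1})$. As $[X]_{k}^{1}<X$, the inductive hypothesis yields $(0,\dots,0;[X]_{k}^{1})\in0^{H}$, whence $(0,\dots,0;X)\in0^{H}$.

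The remaining case $[X]_{k}^{0}=k-1$, i.e.\ $X\equiv-1\pmod k$, is the crux. Now the previous move is blocked, because $\sigma^{-1}$ produces the leading digit $k-1$, which cannot be reset. To proceed one uses the extra freedom in $H$: before applying $\sigma^{-1}$ (respectively $\sigma$) one may reset $x_{0}$ (respectively $x_{s}$) to any $j\in\{0,\dots,k-2\}$, and by~\eqref{eq:sigmai}--\eqref{inverse} this lands, after one further reset, on $(0,\dots,0;[jt+X]_{k}^{1})$ (respectively $(0,\dots,0;[kX+j]_{t}^{0})$), the reset being legitimate as long as $[jt+X]_{k}^{0}\ne k-1$ (respectively $[kX+j]_{t}^{1}\le k-2$). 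The hypothesis $k\nmid t$ guarantees an admissible choice exists — for instance $j=1$ always gives $[t+X]_{k}^{0}=[t-1]_{k}^{0}\le k-2$ — and, splitting into cases according to the size of $X$ relative to $t$ and using $k\nmid t$ once more to rule out the degenerate equalities, one checks that some composition of these moves reaches $(0,\dots,0;X')$ with $X'<X$. Here the intermediate values may exceed $X$, which is harmless since membership in $0^{H}$ is preserved by every move in both directions. The inductive hypothesis then completes this case.

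The reduction and the case $[X]_{k}^{0}\le k-2$ are routine; the genuine obstacle is the case $X\equiv-1\pmod k$, where the direct division by $k$ is unavailable and one has to route around the digit value $k-1$ using multiplication and division by $k$ (via $\sigma^{\pm1}$) together with digit resets, all while keeping every digit in $\{0,\dots,k-2\}$. It is exactly here that $k\nmid t$ is needed, and determining which $X'<X$ is reachable seems to require a careful case analysis.
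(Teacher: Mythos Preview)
Your overall architecture matches the paper's: first reduce $x$ with $T(x)=0$ to $(0,\dots,0;X)$ with $X\in[t]$, then show that each nonzero $X\in[t]$ reaches a strictly smaller element of $[t]$ under $H$. Your reduction step and your treatment of the case $[X]_{k}^{0}\le k-2$ are correct (the paper does the latter via $\beta_\tau$ first and then $\sigma^{-1}$, you via $\sigma^{-1}$ first and then a reset of $x_s$; both are fine).

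The gap is in the case $[X]_{k}^{0}=k-1$. You describe two families of moves and assert that ``some composition of these moves reaches $(0,\dots,0;X')$ with $X'<X$'', but you never carry this out, and you yourself flag that it ``seems to require a careful case analysis''. Your one explicit observation---that $j=1$ is admissible for the $\sigma^{-1}$ route, landing on $X'=\lfloor(t+X)/k\rfloor$---does not close the induction: for $X=k-1$ and large $t$ this $X'$ is much larger than $X$, and nothing else in your writeup produces a smaller value. So as written the argument is incomplete precisely at the point you identify as the crux.

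The paper's treatment of this case is in fact short, and rests on an ingredient you did not isolate: the conjugate $\beta_\tau=\sigma^{-1}\rho_\tau\sigma$, which on $y=(y_s,\dots,y_0;Y)$ adds $([y_0t+Y]_k^0)^\tau-[y_0t+Y]_k^0$ to $y$. From $x=(0,\dots,0;X)$ with $[X]_k^0=k-1$, apply $\alpha_s$ to set $x_0=1$, so that the relevant residue becomes $[t+X]_k^0=[t]_k^0-1\in\{0,\dots,k-2\}$. If $[t]_k^0\ne1$ this residue is positive, a suitable $\beta_\tau$ decreases it by $1$, and undoing $\alpha_s$ yields $X-1$. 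If $[t]_k^0=1$ the residue is $0$, $\beta_{(0,1)}$ increases it to $1$, undoing $\alpha_s$ gives $X+1$, and since $k\mid X+1$ one more $\sigma^{-1}$ lands on $(X+1)/k<X$. Thus the ``careful case analysis'' you anticipated collapses to two lines once you work with $\alpha_s\beta_\tau\alpha_s$ (which shifts $X$ by $\pm1$) rather than only the coarser $\sigma^{\pm1}$-plus-reset composites.
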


\begin{proof}
It follows from $T(x)=0$ that $x_i\neq k-1$ for $i\in[s+1]$. This combined with~\eqref{tau} shows
\[x^{\prod_{i=0}^s\sigma^{s-i}\rho_{(0,x_i)}\sigma^{-(s-i)}}=(x_s^{(0,x_s)},\ldots,x_0^{(0,x_0)};X)=(0,\ldots,0;X)=X\in[t]\]
and $\prod_{i=0}^s\sigma^{s-i}\rho_{(0,x_i)}\sigma^{-(s-i)}\in H$. So it suffices to prove that $[t]\subseteq 0^H$. We achieve this by showing that $x^H$ contains an integer less than $x$ for each $x\in[t]\setminus\{0\}$.

Let $x\in [t]\setminus\{0\}$.
If $[x]_{k}^{0}=0$, then $x^{\sigma^{-1}} = x/k<x$.
If $[x]_{k}^{0}\neq 0$ and $[x]_{k}^{0}\neq k-1$, then $x^{\beta_{\tau}}=x-[x]_k^0<x$, where $\tau=(0,[x]_k^0)$.
If $[x]_{k}^{0}=k-1$ and $[t]_{k}^{0}\neq 1$, then $[t+x]_{k}^{0}\notin\{0,k-1\}$, and so there exists $\tau\in \Sym([k-1])$ such that $([t+x]_{k}^{0})^{\tau}=[t+x]_{k}^{0}-1$, which leads to
\[
 x^{\alpha_{s}\beta_{\tau}\alpha_{s}}=(t+x)^{\beta_{\tau}\alpha_{s}}=(t+x-1)^{\alpha_{s}}= x-1<x.
\]
If $[x]_{k}^{0}=k-1$  and $[t]_{k}^{0}=1$, then $[t+x]_{k}^{0}=0$  and hence
\[
 x^{\alpha_{s}\beta_{(0,1)}\alpha_{s}\sigma^{-1}}= (t+x)^{\beta_{(0,1)}\alpha_{s}\sigma^{-1}}=(t+x+1)^{\alpha_{s}\sigma^{-1}}
=(x+1)^{\sigma^{-1}}=(x+1)/k<x.
\]
Therefore, $[t]\subseteq0^{H}$, as desired.
\end{proof}

\begin{lemma}
\label{xy}
Let $x=(x_{s},x_{s-1},\ldots,x_1,x_0;X)\in [kn-1]$. If $1\leq T(x)\leq s$, then there exists $y=(y_{s},y_{s-1},\ldots,y_1,y_0;Y)\in x^{H}$ such that either $T(y)=0$, or $y_0=0$, $y_1=k-1$ and $T(x)\geq T(y)$.
\end{lemma}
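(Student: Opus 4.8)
The plan is to prove the statement by induction on the pair $(T(x),x)$ in lexicographic order. If $x$ already satisfies $x_{0}=0$ and $x_{1}=k-1$, take $y=x$. Otherwise I would produce $z\in x^{H}$ which is either of this target form with $T(z)\le T(x)$ --- in which case $y=z$ --- or satisfies $T(z)<T(x)$, or $T(z)=T(x)$ and $z<x$; then either $T(z)=0$ and $y=z$, or $1\le T(z)\le T(x)\le s$ and the inductive hypothesis applied to $z$ supplies $y$ (using $z^{H}\subseteq x^{H}$ and $T(y)\le T(z)\le T(x)$).

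To build $z$ I would use that $\sigma$, the digit-permutations $\sigma^{s-i}\rho_{\tau}\sigma^{-i}$ with $\tau\in\Sym([k-1])$, the elements $\alpha_{i}$, and the elements $\beta_{\tau}$ all lie in $H$, with the effect on the coordinates $(x_{s},\dots,x_{0};X)$ recorded above. First, using digit-permutations, I would replace every digit $x_{i}\notin\{0,k-1\}$ by $0$; this strictly decreases $x$ without changing $T(x)$, so it suffices to treat the case in which all digits of $x$ lie in $\{0,k-1\}$, and then, since $1\le T(x)\le s$, at least one digit equals $k-1$ and at least one equals $0$.

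The core task is to rotate one $k-1$-digit into position $1$ with a $0$ immediately below, never leaving $T$ above $T(x)$. The difficulty is that each $\sigma^{\pm1}$ absorbs an extremal digit of the string into $X$ and re-creates a digit out of a carry involving $X$, so a $k-1$ can leak back unless $X$ is first put into a favourable range. I would first dispose of the case that the top digit is $k-1$: one $\sigma$ then either lowers $T$, when the inserted bottom digit $[kX+(k-1)]_{t}^{1}$ is $\neq k-1$ (after a digit-permutation clearing it to $0$ if needed), or, when that digit equals $k-1$, keeps $T$ fixed while forcing $X$ into the top $\tfrac1k$-portion of $[t]$, where $k\nmid t$ makes the resulting transformation of $X$ strictly decrease it, so finitely many more $\sigma$'s reduce to the previous alternative. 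So assume the top digit is $0$ and split on the bottom digit. If it is $0$, then the digit above it is $0$ too (else $x$ is already in target form): here $\sigma^{-1}$, which sends $x$ to $(X\bmod k,\,x_{s},\dots,x_{1};\,\lfloor X/k\rfloor)$, combined with a digit-permutation, strictly decreases $x$ and divides $X$ by $k$ whenever $X\not\equiv k-1\pmod k$, while if $X\equiv k-1\pmod k$ the composite $\alpha_{s}\beta_{\tau}\alpha_{s}$ (suitable $\tau$) decreases $X$ by $(t-1)\bmod k$, positive unless $t\equiv1\pmod k$ --- the residual case handled by $\alpha_{s}\sigma^{-1}$; driving $X$ down and then rotating reaches the target form. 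If the bottom digit is $k-1$ then $\sigma$ moves it to position $1$ and inserts $[kX]_{t}^{1}$ below it, which is the target form when $X$ is small, and when $X$ is large one first reduces $X$, by $\beta_{\tau}$ when $((k-1)t+X)\bmod k\notin\{0,k-1\}$ and by $\sigma^{-1}$ (which additionally lowers $T$) when that residue is $0$.

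The main obstacle, I expect, is the single residual configuration this leaves --- top digit $0$, bottom digit $k-1$, and $X$ in the top $\tfrac1k$-fraction of $[t]$ with $(k-1)t+X\equiv-1\pmod k$ --- for which no single one of $\sigma^{\pm1}$, $\beta_{\tau}$ improves the pair $(T(\cdot),\cdot)$. (This already occurs in small cases: for $k=3$, $t=4$, $s=1$ and $x=11$, neither $\sigma^{\pm1}$ nor $\rho_{(0,1)}$ decreases the pair $(T(\cdot),\cdot)$, yet $11^{\sigma^{3}}$ has $T=0$.) One must instead run a short explicit block of moves --- $\sigma$'s interleaved with digit-permutations at the top of the string --- that carries the bottom run of $k-1$'s past the single $0$, with $k\nmid t$ being precisely what forces the auxiliary quantity driving this block to decrease, hence makes it terminate at a state with strictly smaller pair. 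Checking that such a block exists for all admissible $(k,t,s,X)$ and never leaves $T$ above $T(x)$ is the technical heart; the remaining steps are routine calculations with the displayed formulas.
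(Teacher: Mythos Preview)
Your proposal has a genuine gap: you yourself isolate a residual configuration (top digit $0$, bottom digit $k-1$, and $(k-1)t+X\equiv -1\pmod k$ with $X$ near the top of $[t]$) for which none of your single moves makes progress, and you defer its treatment to an unspecified ``short explicit block of moves'' whose existence and correctness you do not establish. That is not a detail but, as you say, the technical heart; without it the argument is incomplete. Even in your own test case ($k=3$, $t=4$, $s=1$, $x=11$) the path $\sigma^{3}$ you cite passes through $T=2=s+1$, so the block you envisage---required to keep $T\le T(x)$ throughout---would have to be something else again.

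The paper sidesteps this difficulty with a much shorter argument, organised around $\sigma^{-1}$ rather than $\sigma$, and around maintaining the invariant ``bottom digit $\ne k-1$'' rather than your lexicographic induction on $(T(x),x)$. First apply $\sigma^{-\ell}$, where $\ell$ is the least index with $x_{\ell}\ne k-1$; this discards $\ell$ bottom digits all equal to $k-1$, so $T$ cannot increase, and the new bottom digit is $z_{0}=x_{\ell}\ne k-1$. Now iterate a single step: if $[z_{0}t+Z]_{k}^{0}\ne k-1$, apply $\sigma^{-1}$; if $[z_{0}t+Z]_{k}^{0}=k-1$, first nudge $z_{0}$ to $z_{0}^{\tau}$ with $|z_{0}^{\tau}-z_{0}|=1$ via $\sigma^{s}\rho_{\tau}\sigma^{-s}$ (possible since $z_{0}\ne k-1$ and $k\ge3$, so some $\tau\in\Sym([k-1])$ works), and then apply $\sigma^{-1}$. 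Because $k\nmid t$, the two residues $[z_{0}t+Z]_{k}^{0}$ and $[z_{0}^{\tau}t+Z]_{k}^{0}$ cannot both equal $k-1$, so in either branch the incoming top digit is $\ne k-1$ and $T$ is preserved exactly. After finitely many such steps position~$1$ holds a $k-1$ (stop at the first $j$ with $z_{j+1}=k-1$), and one final digit-permutation clears position~$0$ to~$0$. There is no residual case; this nudge trick is exactly the idea your outline is missing, and in particular it handles your example $x=11$ in one step ($\ell=1$, then $j=0$).
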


\begin{proof}
Let $\ell$ be the smallest integer such that $x_\ell\ne k-1$. Write
\begin{equation*}
x^{\sigma^{-\ell}}=z=(z_{s},z_{s-1},\ldots,z_1,z_0;Z).
\end{equation*}
Applying~\eqref{inverse} repeatedly, we derive $z_{s-\ell}=x_s$, $z_{s-\ell-1}=x_{s-1},\ldots,z_{1}=x_{\ell+1}$, $z_0=x_{\ell}$. Since $x_0=\cdots=x_{\ell-1}=k-1$, it follows that $T(x)\geq T(z)$. If $T(z)=0$, then we confirm the lemma by taking $y=z$. In what follows, assume $T(z)>0$.

Since $z_0=x_{\ell}\ne k-1$ and $k\geq 3$,  there exists $\tau\in\Sym([k-1])$ such that $|z_0^{\tau}-z_0|=1$.
Set
\begin{equation*}
\mu_0=
  \begin{cases}
    \sigma^{-1} & \text{if}~[z_0t+Z]_{k}^{0}\ne k-1 \\
    \sigma^{s}\rho_{\tau}\sigma^{-s-1} & \text{if}~[z_0t+Z]_{k}^{0}=k-1.
  \end{cases}
\end{equation*}
Then by~\eqref{inverse} and \eqref{tau}, we obtain
\begin{equation*}
z^{\mu_0}=
  \begin{cases}
    \left([z_0t+Z]_k^0,z_s,\ldots,z_1;[z_0t+Z]_k^1\right) & \text{if}~[z_0t+Z]_{k}^{0}\ne k-1 \\
    \left([z_0^{\tau} t+Z]_k^0,z_s,\ldots,z_1;[z_0^{\tau}t+Z]_k^1\right) & \text{if}~[z_0t+Z]_{k}^{0}=k-1.
  \end{cases}
\end{equation*}
If both $[z_0t+Z]_{k}^{0}$ and $[z_0^{\tau}t+Z]_{k}^{0}$ are equal to $k-1$, then it follows from   $|z_0^{\tau}-z_0|=1$ that $k$ divides $t$, a contradiction. Thus $[z_0^{\tau}t+Z]_{k}^{0}\ne k-1$ if  $[z_0t+Z]_{k}^{0}= k-1$. Consequently, $T(z^{\mu_0})=T(z)$.

Let $j$ be the smallest integer such that $z_{j+1}=k-1$. Since, in particular, none of $z_1,\ldots,z_{j-1}$ is equal to $k-1$, along the same lines as the above paragraph we can take $\mu_1,\ldots,\mu_{j-1}\in H$ such that $z^{\mu_0\mu_1\cdots\mu_{j-1}}=(w_{s},w_{s-1},\ldots,w_1,w_0;W)$ with $w_0=z_j$, $w_1=z_{j+1}$ and
\begin{equation*}
T(z^{\mu_0\mu_1\cdots\mu_{j-1}})=\cdots=T(z^{\mu_0\mu_1})=T(z^{\mu_0})=T(z).
\end{equation*}
Let $w=z^{\mu_0\mu_1\cdots\mu_{j-1}}$ and $y=w^{\sigma^{s}\rho_{(0,w_0)}\sigma^{-s}}$. Since $w_0=z_j\neq k-1$ and $w_1=z_{j+1}=k-1$, it follows from~\eqref{tau} that
\[
y=(w_{s},w_{s-1},\ldots,w_1,w_0;W)^{\sigma^{s}\rho_{(0,w_0)}\sigma^{-s}}=(w_s,\ldots,w_2,k-1,0;W)
\]
and $T(y)=T(w)$. This together with $\sigma^{-\ell}\mu_0\mu_1\cdots\mu_{j-1}\sigma^{s}\rho_{(0,w_0)}\sigma^{-s}\in H$ and $T(x)\geq T(z)=T(w)$ completes the proof.
\end{proof}

\begin{lemma}\label{lm:Ts}
  If $T(x)=s+1$, then $x^H$ contains an integer less than $x$.
\end{lemma}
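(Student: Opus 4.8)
The plan is to show that a single power $\sigma^{-j-1}\in H$ already carries $x$ strictly below itself, so that in fact no $\rho_\tau$ is needed. As in Lemma~\ref{xy} we read the statement with $x\in[kn-1]$. Since $T(x)=s+1$ forces every base-$k$ digit of $x$ to equal $k-1$, we have $x=(k-1,\ldots,k-1;X)=(k^{s+1}-1)t+X$ for some $X\in[t]$; and since $kn-1=(k-1,\ldots,k-1;t-1)$, the hypothesis $x\neq kn-1$ is exactly $X\leq t-2$.

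First I would treat the generic case $r:=[(k-1)t+X]_k^0\neq k-1$. By~\eqref{inverse},
\[
x^{\sigma^{-1}}=(r,k-1,\ldots,k-1;q)=\bigl((r+1)k^s-1\bigr)t+q,\qquad q:=[(k-1)t+X]_k^1 .
\]
From $X\leq t-2$ one gets $q\leq t-1$, and from $r\leq k-2$ one gets
\[
x^{\sigma^{-1}}=\bigl((r+1)k^s-1\bigr)t+q\leq (k^{s+1}-k^s)t-1<(k^{s+1}-1)t\leq x ,
\]
so, since $\sigma^{-1}\in H$, the lemma holds in this case.

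For the general case I would reduce to the previous one. Set $q_0=X$ and $q_{i+1}=[(k-1)t+q_i]_k^1$. Using~\eqref{inverse} inductively, as long as $[(k-1)t+q_i]_k^0=k-1$ we have $x^{\sigma^{-i}}=(k-1,\ldots,k-1;q_i)$ and then $q_{i+1}=\bigl((k-1)(t-1)+q_i\bigr)/k$; starting from $q_0=X\leq t-2$, a short induction shows $q_i<q_{i+1}\leq t-2$ at every such step. Hence $(q_i)$ is a strictly increasing sequence of integers bounded above by $t-2$, so there is a least index $j$ with $[(k-1)t+q_j]_k^0\neq k-1$. Applying the generic case to $x^{\sigma^{-j}}=(k-1,\ldots,k-1;q_j)$ (all digits $k-1$, remainder $q_j\leq t-2$) gives $x^{\sigma^{-j-1}}<x$ with $\sigma^{-j-1}\in H$, completing the argument.

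Everything except the termination of the iteration in the last paragraph is mechanical unwinding of the digit formulas~\eqref{spower}--\eqref{ta}. The termination — that iterating $\sigma^{-1}$ eventually moves the top digit off $k-1$ — is the only step with any content, and it is precisely where $x\neq kn-1$ (equivalently $X\leq t-2$) enters: it makes the remainder map $q\mapsto\bigl((k-1)(t-1)+q\bigr)/k$ strictly increasing while keeping it $\leq t-2$, and a strictly increasing bounded sequence of integers cannot be infinite. I do not anticipate any genuine obstacle beyond careful bookkeeping of the base-$k$ digits.
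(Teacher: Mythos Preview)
Your argument is correct, but the paper's proof takes a much shorter route: instead of iterating $\sigma^{-1}$ and arguing termination, it applies the single forward power $\sigma^{s+1}$ and uses~\eqref{spower} to compute directly
\[
x^{\sigma^{s+1}}=k^{s+1}X+(k-1)\sum_{i=0}^s k^i=k^{s+1}(X+1)-1,
\]
whence $x-x^{\sigma^{s+1}}=(k^{s+1}-1)(t-X-1)>0$ since $X\leq t-2$. That is the whole proof. Your approach works too and has its own logic (you stay with the one-step inverse formula~\eqref{inverse} and never need the closed form~\eqref{spower}), but you pay for that by having to control the auxiliary sequence $(q_i)$ and prove it terminates. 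One small point of presentation: when you write ``applying the generic case to $x^{\sigma^{-j}}$ \ldots\ gives $x^{\sigma^{-j-1}}<x$'', the ``generic case'' as stated only yields $x^{\sigma^{-j-1}}<x^{\sigma^{-j}}$; what you are really using is the uniform bound $x^{\sigma^{-j-1}}\leq(k^{s+1}-k^s)t-1<(k^{s+1}-1)t\leq x$, which holds for the \emph{original} $x$ as well. It would be clearer to say that explicitly.
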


\begin{proof}
Since $x_i=k-1$ for every $i\in[s+1]$, we have
\begin{equation*}
x=(k-1)(k^s+\cdots+k+1)t+X=(k^{s+1}-1)t+X=k^{s+1}t-(t-X).
\end{equation*}
Observe that~\eqref{spower} implies
\[
 x^{\sigma^{s+1}} =k^{s+1}X+(k-1)\sum_{i=0}^{s} k^i =k^{s+1}(X+1)-1.
\]
Since $x=k^{s+1}t-(t-X)=kn-(t-X)<kn-1$, it follows that $X<t-1$ and so
\[
x-x^{\sigma^{s+1}}=k^{s+1}t-(t-X)-k^{s+1}(X+1)+1=
(k^{s+1}-1)(t-X-1)>0.
\]
Thus $x>x^{\sigma^{s+1}}$.
\end{proof}

Now we are ready to prove Theorem \ref{thm2tran}.

\begin{proof}[Proof of Theorem \ref{thm2tran}]
Recall our notation that $n=k^st$ with $s\geq0$, $t>1$ and $k\nmid t$, and $H$ is the subgroup of $G_{k,kn}$ generated by $\sigma$ and $\rho_{\tau}$ for all $\tau\in \Sym([k-1])$. Then $H$ is contained in the stabiliser of $kn-1$ in $G_{k,kn}$. Since $G_{k,kn}$ is transitive, it is $2$-transitive if $H$ is transitive on $[kn-1]$. Thus it suffices to prove that $[kn-1]\subseteq 0^H$.  Let
\begin{equation*}
x=(x_{s},x_{s-1},\ldots,x_1,x_0;X)\in [kn-1],
\end{equation*}
where $X\in[t]$ and $x_i\in[k]$ for $i\in[s+1]$.
Recall that $T(x)=|\{i\in[s+1]\mid x_i=k-1\}|$. We show $x\in 0^{H}$ for all $x\in[kn-1]$ by induction on $T(x)$. The base case $T(x)=0$ has been confirmed by Lemma~\ref{lm:T0}. Now let $T(x)\geq 1$ and suppose that $y\in 0^{H}$ for all $y\in[kn-1]$ with $T(y)<T(x)$.
We will complete the proof by constructing $y\in x^H$ such that $T(y)<T(x)$.

If $T(x)=s+1$, then since $x$ is finite, we derive by using Lemma~\ref{lm:Ts} repeatedly that there exists $y\in x^H$ with $T(y)<T(x)$.
In the following we assume that $1\le T(x)<s+1$.  By Lemma~\ref{xy}, we can further assume $x=(x_s,\ldots,x_2,k-1,0;X)$.
The proof proceeds in two cases.

\textsf{Case~1}: $[t]_k^0\neq k-1$.

Let $z=(x_s,\ldots,x_2,k-1,0;Z)$, where $Z=X+1-[X+1]_k^0\equiv 0\pmod{k}$.
We first show in the next paragraph that $z\in x^H$.

If $[X]_k^0\neq k-1$, then letting $\tau=(0, [X]_k^0)$, we have
\[
  x^{\beta_{\tau}}=(x_s,\ldots,x_2,k-1,0;X-[X]_k^0)=(x_s,\ldots,x_2,k-1,0;X+1-[X+1]_k^0)=z.
\]
Now assume $[X]_k^0= k-1$. Then $[t+X]_k^0\neq k-1$ and $[t+X]_k^0-[t]_k^0=-1$. Letting $\tau=([t]_k^0, [t+X]_k^0)$, we have
  \begin{align*}
    x^{\alpha_s\beta_{\tau}\alpha_s}&=(x_s,\ldots,x_2,k-1,0;X)^{\alpha_s\beta_{\tau}\alpha_s}\\
    &=(x_s,\ldots,x_2,k-1,1;X)^{\beta_{\tau}\alpha_s}\\
    &=(x_s,\ldots,x_2,k-1,1;X+[t]_k^0-[t+X]_k^0)^{\alpha_s}\\
    &=(x_s,\ldots,x_2,k-1,0;X+1)\\
    &=(x_s,\ldots,x_2,k-1,0;X+1-[X+1]_k^0)\\
    &=(x_s,\ldots,x_2,k-1,0;Z).
  \end{align*}
Therefore, $z=(x_s,\ldots,x_2,k-1,0;Z)\in x^H$.

In view of~\eqref{inverse}, we obtain that
\[
  z^{\sigma^{-2}}=(0,x_s,\ldots,x_2,k-1;Z/k)^{\sigma^{-1}}=([(k-1)t+Z/k]_k^0,0,x_s,\ldots,x_2;[(k-1)t+Z/k]_k^1)
\]
and that, with $W:=(k-1)t+(Z+t-[t]_k^0)/k$,
\begin{align*}
  z^{\alpha_s\sigma^{-2}}&=(x_s,\ldots,x_2,k-1,1;Z)^{\sigma^{-2}}\\
  &=\left([t]_k^0,x_s,\ldots,x_2,k-1;\frac{Z+t-[t]_k^0}{k}\right)^{\sigma^{-1}}=\left([W]_k^0,[t]_k^0,x_s,\ldots,x_2;[W]_k^1\right).
\end{align*}
If  $[(k-1)t+Z/k]_k^0\neq k-1$ or $[W]_k^0\neq k-1$, then taking $y=z^{\sigma^{-2}}$ or $y=z^{\alpha_s\sigma^{-2}}$, respectively, we have $y\in z^H=x^H$ and $T(y)=T(x)-1<T(x)$. This completes the proof for the case $[(k-1)t+Z/k]_k^0\neq k-1$ or $[W]_k^0\neq k-1$.

Next assume $[(k-1)t+Z/k]_k^0= k-1=[W]_k^0$, or equivalently, $t-1\equiv Z/k\pmod{k}$ and $(t-[t]_k^0)/k \equiv 0\pmod{k}$.
If $Z/k=1$, then $[t]_k^0=2$, which together with the assumption of Case 1 implies that $k>3$, and hence $\beta_{(0,2)}\in H$. Thus, taking
\begin{align*}
  y&=z^{\alpha_s\beta_{(0,2)}\alpha_s\sigma^{-2}}\\
  &=(x_s,\ldots,x_2,k-1,1;k)^{\beta_{(0,2)}\alpha_s\sigma^{-2}}\\
  &=(x_s,\ldots,x_2,k-1,1;k-2)^{\alpha_s\sigma^{-2}}\\
  &=(x_s,\ldots,x_2,k-1,0;k-2)^{\sigma^{-2}}\\
  &=(k-2,x_s,\ldots,x_2,k-1;0)^{\sigma^{-1}}\\
  &=(k-2,k-2,x_s,\ldots,x_2; [(k-1)t]_k^1)
\end{align*}
we have $y\in z^H=x^H$ and $T(y)=T(x)-1<T(x)$, as desired.
Similarly, if $Z/k\geq 2$, then as $[t+Z]_k^0=[t]_k^0\notin\{0,k-1\}$, taking $\tau=([t]_k^0, [t]_k^0-1)$, $\mu=(k-2 , k-3)$ and
\begin{align*}
 y&=z^{\alpha_s\beta_{\tau}\alpha_s\sigma^{-1}\beta_{\mu}\sigma\alpha_s\sigma^{-2}}\\
  &=(x_s,\ldots,x_2,k-1,0;Z-1)^{\sigma^{-1}\beta_{\mu}\sigma\alpha_s\sigma^{-2}}\\
  &=(k-1,x_s,\ldots,x_2,k-1;Z/k-1)^{\beta_{\mu}\sigma\alpha_s\sigma^{-2}}\\
  &=(k-1,x_s,\ldots,x_2,k-1;Z/k-2)^{\sigma\alpha_s\sigma^{-2}}\\
  &=(x_s,\ldots,x_2,k-1,1;Z-k-1)^{\sigma^{-2}}\\
  &=\left([t-1]_k^0, x_s,\ldots,x_2,k-1;\frac{Z+t-[t]_k^0}{k}-1\right)^{\sigma^{-1}}\\
  &=([W-1]_k^0,[t-1]_k^0, x_s,\ldots,x_2;[W-1]_k^1)\\
  &=(k-2,[t-1]_k^0, x_s,\ldots,x_2;[W-1]_k^1),
\end{align*}
we have $y\in z^H=x^H$ and $T(y)=T(x)-1<T(x)$, as desired. If $Z=0$, then $[t]_k^0=1$, which implies that
\begin{equation*}
  z^{\alpha_s\beta_{(0,1)}}=(x_s,\ldots,x_2,k-1,1;0)^{\beta_{(0,1)}}=(x_s,\ldots,x_2,k-1,0;t-1),
\end{equation*}
and then the previous two sentences show that there exists $y\in(z^{\alpha_s\beta_{(0,1)}})^H=x^H$ with $T(y)<T(x)$.

\textsf{Case~2}: $[t]_{k}^{0}=k-1$.

Recall that $x=(x_s,\ldots,x_2,k-1,0;X)$. Let
\[
  u=(0,x_s,\ldots,x_2,k-1;U)\quad \text{and} \quad v=(0,x_s,\ldots,x_2,k-1;V),
\]
where $U=(X-[X]_k^0)/k$ and $V=(t+X+1-[X]_k^0)/k$. We first show that $u,v\in x^H$.

If $[X]_{k}^{0}=k-1$, then $[t+X]_k^0=k-2$, and it follows that
\begin{align*}
  x^{\alpha_s\beta_{(k-2,k-3)}\alpha_s\beta_{(0,k-2)}\sigma^{-1}}
   &= (x_s,\ldots,x_2,k-1,1;X)^{\beta_{(k-2,k-3)}\alpha_s\beta_{(0,k-2)}\sigma^{-1}}\\
   &= (x_s,\ldots,x_2,k-1,1;X-1)^{\alpha_s\beta_{(0,k-2)}\sigma^{-1}}\\
   &= (x_s,\ldots,x_2,k-1,0;X-1)^{\beta_{(0,k-2)}\sigma^{-1}}\\
   &= (x_s,\ldots,x_2,k-1,0;X-k+1)^{\sigma^{-1}}\\
   &= (0,x_s,\ldots,x_2,k-1;U)\\
   &= u.
\end{align*}
 If $[X]_{k}^{0}\neq k-1$, then letting $\tau=(0,[X]_k^0)$, we have
\[
  x^{\beta_{\tau}\sigma^{-1}}= (x_s,\ldots,x_2,k-1,0;X-[X]_k^0)^{\sigma^{-1}}=u.
\]
Hence it always holds that $u\in x^H$. If $[t+X]_{k}^{0}=k-1$, then $[X]_k^0=0$ and thus
\begin{align*}
  x^{\beta_{(0,1)}\alpha_s\sigma^{-1}}&=(x_s,\ldots,x_2,k-1,0;X+1)^{\alpha_s\sigma^{-1}} \\
  &= (x_s,\ldots,x_2,k-1,1;X+1)^{\sigma^{-1}}=(0,x_s,\ldots,x_2,k-1;V)=v.
\end{align*}
 If $[t+X]_{k}^{0}\neq k-1$, then as $[t+X]_k^0=[X]_k^0-1$, we obtain by taking $\tau=(0,[t+X]_k^0)$ that
\begin{align*}
  x^{\alpha_s\beta_{\tau}\sigma^{-1}}
  &= (x_s,\ldots,x_2,k-1,1;X)^{\beta_{\tau}\sigma^{-1}}\\
  &= (x_s,\ldots,x_2,k-1,1;X-[t+X]_k^0)^{\sigma^{-1}}=(0,x_s,\ldots,x_2,k-1;V)=v.
\end{align*}
Therefore, $v\in x^H$ always holds as well.

Now we have proved $u,v \in x^H$. If $[(k-1)t+U]_{k}^{0}\neq k-1$, then since
\[
u^{\sigma^{-1}}=([(k-1)t+U]_{k}^{0},0,x_s,\ldots,x_2;[(k-1)t+U]_{k}^1),
\]
it follows that $u^{\sigma^{-1}}\in x^H$ with $T(u^{\sigma^{-1}})=T(x)-1<T(x)$.
Similarly, if $[(k-1)t+V]_{k}^{0}\neq k-1$, then $v^{\sigma^{-1}}\in x^H$ with $T(v^{\sigma^{-1}})=T(x)-1<T(x)$.
This completes the proof for the case $[(k-1)t+U]_{k}^{0}\ne k-1$ or $[(k-1)t+V]_{k}^{0}\neq k-1$.

Next assume $[(k-1)t+U]_{k}^{0}=k-1=[(k-1)t+V]_{k}^{0}$. Since $X-[X]_k^0\leq X\leq t-1$,
\begin{align*}
  u^{\sigma\alpha_s\sigma^{-1}}&= (x_s,\ldots,x_2,k-1,0;X-[X]_k^0)^{\alpha_s\sigma^{-1}}\\
  &= (x_s,\ldots,x_2,k-1,1;X-[X]_k^0)^{\sigma^{-1}}
  = (k-1,x_s,\ldots,x_2,k-1;V-1).
\end{align*}
Moreover, we deduce from $[(k-1)t+U]_{k}^{0}=k-1$ that $U\geq [U]_k^0=k-2\geq 1$, which implies $V\geq2$ and $0\leq X-[X]_k^0-k\leq t-1$. This combined with $[(k-1)t+U]_{k}^{0}=k-1=[(k-1)t+V]_{k}^{0}$ yields that
\begin{align*}
  u^{\sigma\alpha_s\sigma^{-1}\beta_{(k-2,k-3)}\sigma\alpha_s\sigma^{-2}}&
  =(k-1,x_s,\ldots,x_2,k-1;V-1)^{\beta_{(k-2,k-3)}\sigma\alpha_s\sigma^{-2}}\\
  &=(k-1,x_s,\ldots,x_2,k-1;V-2)^{\sigma\alpha_s\sigma^{-2}}\\
  &=(x_s,\ldots,x_2,k-1,1;X-[X]_k^0-k)^{\alpha_s\sigma^{-2}}\\
  &=(x_s,\ldots,x_2,k-1,0;X-[X]_k^0-k)^{\sigma^{-2}}\\
  &=(0,x_s,\ldots,x_2,k-1;U-1)^{\sigma^{-1}}\\
  &=(k-2,0,x_s,\ldots,x_2;[(k-1)t+U-1]_k^1).
\end{align*}
As a consequence, with $y:=u^{\sigma\alpha_s\sigma^{-1}\beta_{(k-2,k-3)}\sigma\alpha_s\sigma^{-2}}\in u^H=x^H$, we finally obtain that $T(y)=T(x)-1<T(x)$.
\end{proof}

\section{Open problems on generalised shuffle groups}\label{sec5}

Shuffle groups on $kn$ cards can be considered in a more general way by restricting the permutations on the set of $k$ piles to a subgroup of $\Sym([k])$. Precisely, if $P\leq \Sym([k])$ is a group of permutations on the set of $k$ piles, then we define the \emph{generalised shuffle group} on $kn$ cards with respect to $P$ by
\[
  \Sh(P,n):=\langle \rho_\tau\sigma \mid \tau\in P\rangle=\langle \sigma, \rho_\tau \mid \tau\in P\rangle,
\]
where $\sigma$ is the standard shuffle and $\rho_\tau$ is the permutation on $kn$ cards induced by the permutation $\tau$ on the $k$ piles.
In particular, $\Sh(\Sym([k]),n)$ is exactly the group $G_{k,kn}$ studied in this paper. Generalised shuffle groups are introduced and systematically studied by Amarra, Morgan and Praeger in~\cite{AMP2021}. Among several open problems, a conjecture~\cite[Conjecture~1.10]{AMP2021} made by them is that if $k\geq3$, $n$ is not a power of $k$ and $(k,n)\neq(4,2^f)$ for any positive integer $f$, then $\Sh(C_k,n)$ contains $\A_{kn}$, where $C_k$ is generated by the $k$-cycle $(0,1,\ldots,k-1)\in \Sym([k])$.

Note that $\Sh(C_k,n)=\langle \sigma, \rho_{(0,1,\ldots,k-1)}\sigma\rangle$. Hence the above mentioned conjecture asserts that, somewhat surprisingly, two shuffles $\sigma$ and $\rho_{(0,1,\ldots,k-1)}\sigma$ are enough to generate $\A_{kn}$ or $S_{kn}$. This suggests that a ``best possible" improvement to Theorem~\ref{classification} would be the determination of $\Sh(C_k,n)$. It is shown in~\cite[Theorem~1.4(1)]{AMP2021} that if $kn=k^m$, then
\begin{equation}\label{eq:k_power}
  \Sh(P,n)=P\wr C_m
\end{equation}
for any $P\leq \Sym([k])$. If $k=4$ and $kn=2^m$ with $m$ odd, then similarly to the proof of~\cite[Theorem~2.6]{CHMW2005} we derive that
\begin{equation}\label{eq:AGL}
  \Sh(C_k,n)=\AGL(m,2).
\end{equation}
According to~\cite[Lemma~2]{MM1987}, the standard shuffle $\sigma$ is an even permutation if and only if
\begin{equation}\label{eq:parity_sigma}
  \frac{k(k-1)}{2}\cdot\frac{n(n-1)}{2}\equiv 0\pmod{2}.
\end{equation}
Observing that $\rho_{(0,1,\ldots,k-1)}$ is a product of $n$ cycles of length $k$, we obtain that $\rho_{(0,1,\ldots,k-1)}$ is even if and only if $(k-1)n$ is even. Hence $\Sh(C_k,n)\leq \A_{kn}$ if and only if
\[
  \frac{k(k-1)}{2}\cdot\frac{n(n-1)}{2}\equiv (k-1)n\equiv 0\pmod{2}.
\]
This together with~\eqref{eq:k_power} and~\eqref{eq:AGL} indicates that~\cite[Conjecture~1.10]{AMP2021} is essentially the following conjectural classification of $\Sh(C_k,n)$ for all $k\geq 3$ and $n\geq 1$.


\begin{conjecture}\label{conj:C_k}
  If $k\geq 3$ and $C_k=\langle(0,1,\ldots,k-1)\rangle\leq \Sym([k])$, then the following hold:
      \begin{enumerate}[\rm(a)]
            \item If $kn=k^m$, then $\Sh(C_k,n)$ is the primitive wreath product $C_k\wr C_m$.
            \item If $k=4$ and $kn=2^m$ with $m$ odd,  then $\Sh(C_k,n)$ is the affine group $\AGL(m,2)$.
            \item If $n$ is not a power of $k$ and either $k(k-1)n(n-1)/4$ or $(k-1)n$ is odd, then $\Sh(C_k,n)=S_{kn}$.
            \item In all other cases, $\Sh(C_k,n)=A_{kn}$.
      \end{enumerate}
\end{conjecture}

A choice of $P$ to make $\Sh(P,n)$ close to $G_{k,kn}$ is $P=\A_k$. For the case $kn=k^m$, it is already known (see~\eqref{eq:k_power}) that $\Sh(\A_k,n)=\A_k\wr C_m$. Moreover, since $\rho_\tau$ is even for each $\tau\in \A_k$, the parity of $\sigma$ implies that $\Sh(\A_k,n)\leq A_{kn}$ if and only if~\eqref{eq:parity_sigma} holds. Therefore, we pose the following conjectural classification of $\Sh(\A_k,n)$ for all $k\geq 3$ and $n\geq 1$.
\begin{conjecture}\label{conj:A_k}
  If $k\geq 3$, then the following hold:
    \begin{enumerate}[\rm(a)]
          \item If $kn=k^m$, then $\Sh(\A_k,n)$ is the primitive wreath product $\A_k\wr C_m$.
          \item If $k=4$ and $kn=2^m$ with $m$ odd,  then $\Sh(\A_k,n)$ is the affine group $\AGL(m,2)$.
          \item If $n$ is not a power of $k$ and $k(k-1)n(n-1)/4$ is odd, then $\Sh(\A_k,n)=S_{kn}$.
          \item In all other cases, $\Sh(\A_k,n)=A_{kn}$.
    \end{enumerate}
\end{conjecture}
Proving this conjecture should be easier than proving Conjecture~\ref{conj:C_k}. For one reason, if $k$ is odd, then $\Sh(C_k,n)\leq \Sh(\A_k,n)$, and so the conclusion of Conjecture~\ref{conj:A_k} is weaker in this case. For another reason, Conjecture~\ref{conj:A_k} is closer to our Theorem~\ref{classification} in the sense that the size of $P$ is only reduced by half from $P=S_k$ to $P=A_k$. Thus, some ideas in the proof of Theorem~\ref{classification} also apply to Conjecture~\ref{conj:A_k}. For example, $\rho_{(0,1,2)}\in \Sh(\A_k,n)$ has fixed point ratio $(k-3)/k$, which is at least $1/2$ when $k\geq 6$. In this way, a parallel result to Theorem~\ref{thmreduct} might still be established by the approach of this paper with an ad hoc treatment for $k\in\{3,4,5\}$.
However, we anticipate more work to be done to  prove the $2$-transitivity of $\Sh(\A_k,n)$.

As a contrast to $P=C_k$ or $A_k$, the choice $P=\langle \mathrm{Rev}(k)\rangle$ from~\cite[Page~6]{MM1987}, where $\mathrm{Rev}(k)$ is the permutation on $[k]$ sending $i$ to $k-1-i$, will make $\Sh(P,n)$ never equal to $G_{k,kn}$. In fact, denoting
\[
  R_{k,kn}=\Sh(\langle \mathrm{Rev}(k)\rangle,n)
\]
and $B_i=\{i,kn-1-i\}$ (can be a singleton if $i=kn-i-1$) for $i\in\{0,1,\ldots, \lfloor (kn-1)/2 \rfloor\}$, we can verify directly  that $R_{k,kn}$ preserves the set $\{B_0,B_1,\ldots,B_{\lfloor (kn-1)/2 \rfloor}\}$. If $kn$ is even, then $\{B_0,B_1,\ldots,B_{\lfloor (kn-1)/2 \rfloor}\}$ is a block system of $R_{k,kn}$, and so $R_{k,kn}\leq C_2\wr S_{kn/2}$ is imprimitive. If $kn$ is odd, then $R_{k,kn}$ fixes the $((kn-1)/2)$-th card and preserves the partition $\{B_0,B_1,\ldots,B_{\lfloor (kn-1)/2 \rfloor-1}\}$ of the rest $kn-1$ cards, which implies that $R_{k,kn}$ is intransitive with $R_{k,kn}\leq C_2\wr S_{(kn-1)/2}$. We have the following conjecture based on computation results.

\begin{conjecture}\label{conj:rev}
  Let $k\geq 2$, $n\geq 2$ and $R_{k,kn}=\Sh(\langle \mathrm{Rev}(k)\rangle,n)$. Suppose that $kn$ is even and $(k,n)\neq (\ell^e,\ell^f)$ for any positive integers $\ell$, $e$ and $f$. Then the following hold:
\begin{enumerate}[\rm(a)]
  \item If $(k,n)=(2,6)$ or $(6,2)$, then $R_{k,kn}=C_2^6\rtimes \PGL(2,5)$.
  \item If $(k,n)=(3,4)$, then $R_{k,kn}=A_5$.
  \item If $(k,n)=(4,3)$, then $R_{k,kn}=C_2\times A_5$.
  \item If $kn=24$, then $R_{k,kn}=C_2^{11}\rtimes M_{12}$.
  \item If $kn\neq 12$, $k\equiv 2$ or $3\pmod{4}$ and $n\equiv 2\pmod{4}$, then  $R_{k,kn}=C_2\wr S_{kn/2}$.
  \item If $k\equiv 2\pmod{4}$ and $n\equiv 1\pmod{4}$, then $R_{k,kn}=C_2^{ kn/2}\rtimes A_{ kn/2}$.
  \item If $k\equiv 2\pmod{4}$ and $n\equiv 3\pmod{4}$, then $R_{k,kn}=C_2^{ (kn-2)/2}\rtimes S_{kn/2}$.
  \item Otherwise, $R_{k,kn}=C_2^{ (kn-2)/2}\rtimes A_{ kn/2}$.
\end{enumerate}
\end{conjecture}

\begin{remark}
Statements~(a)--(d) have been verified by computation in~\textsc{Magma}~\cite{Magma}, and we include them in Conjecture~\ref{conj:rev} for completeness. In fact, statement~(d) is already mentioned in~\cite{MM1987}. The reason why we assume $kn$ even and $(k,n)\neq (\ell^e,\ell^f)$ is that we have not yet identified the patterns of $R_{k,kn}$ if $kn$ is odd or $(k,n)= (\ell^e,\ell^f)$ for some positive integers $\ell$, $e$ and $f$. However, we do have some interesting observations in special cases. For example, for $(k,n)=(\ell^e,\ell^f)$ with $\gcd(e,f)=1$, it seems that
\begin{equation}\label{eq:R_{k,kn}}
  R_{k,kn}=\begin{cases}
    C_2^{e+f-1}\rtimes C_{e+f}&\ \text{ if }\  e\equiv f+1\equiv 0\pmod{2}\\
    C_2\wr C_{e+f}&\ \text{ otherwise}.
  \end{cases}
\end{equation}
  This would be a generalisation of~\cite[Theorem~1.4(1)]{AMP2021}, as the latter can be obtained from~\eqref{eq:R_{k,kn}} by taking $e=1$.
\end{remark}

Finally, we would like to pose the following more challenging question.
\begin{question}\label{ques}
Given $k\geq 3$ and $n\geq 1$ such that $n$ is not a power of $k$ and $(k,n)\neq(4,2^f)$ for any odd integer $f$, for what $\theta\in\Sym([k])$ does $\langle\sigma,\rho_{\theta}\sigma\rangle=\Sh(\langle\theta\rangle,n)$ contain $\A_{kn}$?
\end{question}
Note that a complete answer to Question~\ref{ques} would in particular solve Conjectures~\ref{conj:C_k} and~\ref{conj:rev}. Another interesting consequence would be the proportion
\[
  \frac{\{\theta\in \Sym([k])\mid \Sh(\langle\theta\rangle,n) \text{ contains } \A_{kn}\}}{k!}
\]
of valid permutations $\theta$ in $\Sym([k])$ for a pair $(k,n)$, especially when $k$ and $n$ are large. Our computation results suggest that this proportion is at least (for most cases much larger than) $1/6$.

\vspace{15pt}
\noindent\textbf{Acknowledgement.}
The second author was supported by the Natural Science Foundation of Chongqing (CSTB2022NSCQ-MSX1054). The third author was supported by the Melbourne Research Scholarship provided by The University of Melbourne. The fourth author was supported by the China Scholarship Council (202106040068). The work was done during a visit of the fourth author to The University of Melbourne. The fourth author would like to thank The University of Melbourne for its hospitality and Beijing Normal University for consistent support. The authors wish to express their sincere gratitude to the anonymous referee for careful reading and invaluable suggestions to improve this paper.


\end{document}